\newcommand{\CC}{\mathbb{C}}
\newcommand{\R}{\mathbb{R}}
\newcommand{\Z}{\mathbb{Z}}
\newcommand{\Ric}{\mathrm{Ric}}
\newcommand{\Rm}{\mathrm{Rm}}
\newcommand{\de}{\partial}
\newcommand{\ddbar}{\sqrt{-1} \partial \overline{\partial}}
\newcommand{\ov}[1]{\overline{#1}}
\newcommand{\tr}[2]{\mathrm{tr}_{#1}{#2}}
\newcommand{\ve}{\varepsilon}
\newcommand{\om}{\omega}
\newcommand{\Om}{\Omega}
\newcommand{\ti}[1]{\tilde{#1}}
\newtheorem{theorem}{Theorem}[section]
\newtheorem{lemma}[theorem]{Lemma}
\newtheorem{corollary}[theorem]{Corollary}
\newtheorem{proposition}[theorem]{Proposition}
\numberwithin{equation}{section}
\theoremstyle{definition}
\newtheorem{remark}[theorem]{Remark}
\theoremstyle{definition}
\begin{document}

\title{Global higher order estimates for collapsing Calabi-Yau metrics on elliptic K3 surfaces}

\begin{abstract}
We improve Gross-Wilson's local estimates in \cite{GW} to global ones. As an application, we study the blow-up limits of the degenerating Calabi-Yau metrics on singular fibers. 
\end{abstract}
\author{Wangjian Jian}
\address{Hua Loo-Keng Center of Mathematical Sciences, Academy of Mathematics and Systems Science, Chinese Academy of Sciences, Beijing, 100190, China.}
\email{wangjian@amss.ac.cn}
\author{Yalong Shi}
\address{Department of Mathematics, Nanjing University, Nanjing, China 210093}
\email{shiyl@nju.edu.cn}
\maketitle
\thispagestyle{empty}
\markboth{Higher order estimates on K3 surface}{Wangjian Jian and Yalong Shi}

\tableofcontents


\section{Introduction}\label{intro}

Let $f:X\to B=\mathbb{C}P^1$ be an elliptic  $K3$ surface with 24 singular fibers of Kodaira type $I_1$. Let $p_1, \dots, p_{24}\in B$ be the images of the singular fibers. We denote by $X_b=f^{-1}(b)$ the fiber over $b\in B$. Let $[\om_{\epsilon}]$ be an K\"ahler class on $X$ with $[\om_{\epsilon}]\cdot X_b=\epsilon$. By Yau's proof of Calabi's conjecture, there is always a unique Ricci-flat K\"ahler metric on $X$ in the class $[\om_{\epsilon}]$. Motivated by the Strominger-Yau-Zaslow conjecture of mirror symmetry, Gross-Wilson \cite{GW} studied the asymptotic behavior of these degenerating Ricci-flat K\"ahler metrics. In particular,
they found very accurate  approximation metrics $\om_{\epsilon}$ to the unique Ricci-flat metrics $\tilde\om_{\epsilon}\in[\om_{\epsilon}]$. To be precise, fix a non-vanishing holomorphic 2-form $\Omega$ on $X$,  let $\tilde\om_{\epsilon}:=\om_{\epsilon}+\ddbar u_{\epsilon}$ be the Ricci-flat metric, then $u_{\epsilon}$ satisfies 
\begin{equation}\label{Equ: Ricci-flat metrics 1}
\left\{
\begin{aligned}
      &\left(\om_{\epsilon}+\ddbar u_{\epsilon}\right)^2=e^{F_{\epsilon}}\om_{\epsilon}^2,\\
      &\int_{X}u_{\epsilon}\om_{\epsilon}^2=0,
\end{aligned}
\right.
\end{equation}
where $F_{\epsilon}=\mathrm{log}\left(\frac{\Om\bigwedge\ov{\Om}}{2\om_{\epsilon}^2}\right)$. They proved that $\tilde\om_\epsilon$ is uniformly equivalent to $\om_\epsilon$ globally, and locally the $C^k$-norm of $u_\epsilon$ decays to 0 exponentially fast as $\epsilon\to 0$ in any compact set outside the singular fibers.

The main result of this paper is the following global exponential convergence estimates, which strengthen Gross-Wilson's estimates:

\begin{theorem}\label{Thm: convergence of higher order estimate for CY}
Assume as above, then there exists some $\epsilon_0> 0$ such that for all $0< \epsilon\leq \epsilon_0$, there are constants $C_k, \delta_k > 0$ for each $k$ which are independent of $\epsilon$, such that
\begin{equation}\label{Equ: convergence of higher order estimate for CY}
\|\ti{\om}_{\epsilon}-\om_{\epsilon}\|_{C^k(X,\om_{\epsilon})}\leq C_ke^{-\frac{\delta_k}{\epsilon}},
\end{equation}
and
\begin{equation}\label{Equ: convergence of Rm in CY case}
\|\Rm(\ti{\om}_{\epsilon})-\Rm(\om_{\epsilon})\|_{C^k(X,\om_{\epsilon})}\leq C_ke^{-\frac{\delta_k}{\epsilon}}.
\end{equation}
\end{theorem}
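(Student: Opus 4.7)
The plan is to bootstrap from a global exponentially small $C^{0}$-estimate of $u_\epsilon$ up through all orders, using the Monge--Amp\`ere equation \eqref{Equ: Ricci-flat metrics 1} together with local elliptic regularity on scales at which $\omega_\epsilon$ has bounded geometry. The first step is to verify that the Gross--Wilson background $\omega_\epsilon$ is sufficiently Ricci-flat globally, i.e.\ that $\|F_\epsilon\|_{C^k(X,\omega_\epsilon)}\leq C_k e^{-\delta_k/\epsilon}$ on all of $X$; away from the $I_1$-fibers this is already in \cite{GW}, and near each singular fiber one checks that the Ooguri--Vafa gluing produces the same bound in the rescaled metric, since the model is itself Ricci-flat. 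Granted this, the $C^0$-estimate $\|u_\epsilon\|_{L^\infty}\leq Ce^{-\delta/\epsilon}$ follows either from a Moser iteration, or from Ko\l odziej/Weinkove--Tosatti-type stability of complex Monge--Amp\`ere equations, using that $\vol(X,\omega_\epsilon)$ and $\diam(X,\tilde\omega_\epsilon)$ stay controlled.

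The heart of the argument is the $C^2$ and higher-order improvement near the singular fibers, where the bisectional curvature of $\omega_\epsilon$ blows up and a naive global Yau/Aubin Laplacian estimate fails. To handle this I would work, locally in a neighborhood of each singular fiber, in the Ooguri--Vafa coordinates where a fixed rescaling turns $\omega_\epsilon$ into a metric of uniformly bounded geometry; in this rescaled picture $F_\epsilon$ (and hence, via $C^0$-stability plus Yau's second-order estimate, $u_\epsilon$) remains exponentially small in every $C^k$-norm, and the Monge--Amp\`ere equation becomes uniformly elliptic with smooth coefficients, so Evans--Krylov and Schauder iteration propagate the exponential decay from $F_\epsilon$ to $u_\epsilon$ to all orders. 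Away from the singular fibers the analogous rescaling is provided by the semiflat structure on the smooth torus fibers, where Gross--Wilson's pointwise estimates already give what we need. Patching the two regimes via a finite cover of $X$ adapted to these scales yields \eqref{Equ: convergence of higher order estimate for CY}.

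The step I expect to be the main obstacle is the transition between the semiflat and Ooguri--Vafa regimes, where several natural scales compete and one needs uniform control of the derivatives and curvature of $\omega_\epsilon$ on balls of precisely the right radius in order to keep the rescaled Schauder and Evans--Krylov constants $\epsilon$-independent; this is where the Gross--Wilson construction has to be examined most carefully. Once \eqref{Equ: convergence of higher order estimate for CY} is in hand, the curvature estimate \eqref{Equ: convergence of Rm in CY case} is essentially formal: $\Rm(\tilde\omega_\epsilon)-\Rm(\omega_\epsilon)$ is a universal polynomial in $\tilde g_\epsilon^{-1}$, $g_\epsilon^{-1}$ and the derivatives of $u_\epsilon$ up to order four, so the $C^{2}$-equivalence of $\omega_\epsilon$ and $\tilde\omega_\epsilon$ from \cite{GW}, combined with the exponential decay of $\|u_\epsilon\|_{C^{k+4}(X,\omega_\epsilon)}$, gives the claimed bound.
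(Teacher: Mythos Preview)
Your route is genuinely different from the paper's, and while it can probably be made to work, it has one real gap and one misdiagnosis.

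The paper's argument is entirely global and intrinsic, never passing to local charts or invoking Evans--Krylov/Schauder. The key new step (Lemma~\ref{Lem: c0 convergence of the metrics}) sharpens Gross--Wilson's uniform equivalence $C^{-1}\omega_\epsilon\le\tilde\omega_\epsilon\le C\omega_\epsilon$ to the exponentially small bound $\|\tilde\omega_\epsilon-\omega_\epsilon\|_{C^0(X,\omega_\epsilon)}\le Ce^{-\delta/\epsilon}$, by re-running Yau's second-order maximum-principle computation and tracking that every error term---those coming from $F_\epsilon$, $\Delta F_\epsilon$, and from $c_\epsilon u_\epsilon$ with $c_\epsilon\sim\|\Rm(\omega_\epsilon)\|$---is $O(e^{-\delta/\epsilon})$ thanks to \eqref{Equ: bound on F_epsilon}, \eqref{Equ: C0 bound on u_epsilon} and the merely polynomial growth of $c_\epsilon$. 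Section~3 then proves, by further global maximum-principle arguments on $\Psi=\Gamma(\tilde g_\epsilon)-\Gamma(g_\epsilon)$ and its covariant derivatives, that $\|\Rm(\omega_\epsilon)\|_{C^k(X,\omega_\epsilon)}\le C_k\epsilon^{-n_k}$ and $\|\tilde\omega_\epsilon\|_{C^k(X,\omega_\epsilon)}\le C_k\epsilon^{-n_k}$. Finally one rescales both metrics by $\epsilon^{-n_{N+2}}$ so that these become uniform $C^{N+2}$ bounds, and applies the BIC interpolation principle (Lemma~\ref{Thm: BIC principle}) to upgrade the $C^0$ exponential decay to $C^N$ exponential decay.

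Your local scheme is missing exactly the ingredient that the paper supplies in Proposition~\ref{Pro: polynomial bound reference metrics}: a priori polynomial control $\|\nabla^k\Rm(\omega_\epsilon)\|_{\omega_\epsilon}\le C_k\epsilon^{-n_k}$. Without this you cannot assert that ``a fixed rescaling turns $\omega_\epsilon$ into a metric of uniformly bounded geometry''---in fact no single scale works, since $n_k$ grows with $k$, so the rescaling must depend on the target order $N$---nor can you run Schauder with $\epsilon$-independent constants, nor pass between coordinate $C^k$ norms and intrinsic $C^k(X,\omega_\epsilon)$ norms, nor carry out the final curvature step (the expression for $\Rm(\tilde\omega_\epsilon)-\Rm(\omega_\epsilon)$ contains an undifferentiated factor $\Rm(\omega_\epsilon)$, which must be absorbed by something polynomial). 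You also have the location of the difficulty inverted: the gluing annulus $U_2\setminus U_1$ has radii fixed independently of $\epsilon$ and sits far from the node, so curvature is bounded there and either model chart applies; the bad region is a shrinking neighbourhood of $\tilde p_i$, where however the Ooguri--Vafa metric is explicit and your rescaling idea does succeed. The upshot is that your approach trades the paper's intrinsic maximum-principle work for an equally nontrivial piece of local geometry of $\omega_\epsilon$ that you have not yet supplied.
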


Combining \eqref{Equ: convergence of Rm in CY case} with Proposition 3.8 of \cite{GW}(see also \eqref{Equ: bound on Rm of reference metrics} of Theorem \ref{Thm: existence of almost Ricci-flat metrics} of this paper), we  immediately obtain a global curvature estimate for the degenerating Ricci-flat metric:
\begin{corollary}\label{Cor: polynomial bound for the curvature}
Assume as in Theorem \ref{Thm: convergence of higher order estimate for CY}, then we have 
\begin{equation}\label{Equ: polynomial bound for the curvature}
C_0\epsilon^{-1}\left(\mathrm{log}(\epsilon^{-1})\right)^{-2}\leq\|\Rm(\ti\om_{\epsilon})\|_{C^0(X,\ti\om_{\epsilon})}\leq C_0\epsilon^{-1}\mathrm{log}(\epsilon^{-1}),
\end{equation}
where $C_0$ is a positive constant independent of $\epsilon$.
\end{corollary}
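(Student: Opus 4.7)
\textbf{Proof plan for Corollary \ref{Cor: polynomial bound for the curvature}.}
The plan is to obtain both bounds by combining three ingredients: (a) the two-sided curvature estimates for the reference metric $\om_\epsilon$ supplied by the cited Proposition 3.8 of \cite{GW} (equivalently \eqref{Equ: bound on Rm of reference metrics} of Theorem \ref{Thm: existence of almost Ricci-flat metrics}), which read
\[
c_0\,\epsilon^{-1}(\log\epsilon^{-1})^{-2}\le \|\Rm(\om_\epsilon)\|_{C^0(X,\om_\epsilon)}\le C_0\,\epsilon^{-1}\log(\epsilon^{-1});
\]
(b) the exponential convergence estimate \eqref{Equ: convergence of Rm in CY case} for $k=0$, namely $\|\Rm(\ti\om_\epsilon)-\Rm(\om_\epsilon)\|_{C^0(X,\om_\epsilon)}\le C\,e^{-\delta/\epsilon}$; and (c) the global uniform equivalence $\ti\om_\epsilon\sim\om_\epsilon$ of \cite{GW}, which transfers pointwise tensor norms between the two metrics up to multiplicative constants independent of $\epsilon$.

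For the upper bound, I would simply apply the triangle inequality in $C^0(X,\om_\epsilon)$:
\[
\|\Rm(\ti\om_\epsilon)\|_{C^0(X,\om_\epsilon)}\le \|\Rm(\om_\epsilon)\|_{C^0(X,\om_\epsilon)}+\|\Rm(\ti\om_\epsilon)-\Rm(\om_\epsilon)\|_{C^0(X,\om_\epsilon)}\le C_0\epsilon^{-1}\log(\epsilon^{-1})+Ce^{-\delta/\epsilon},
\]
and absorb the exponentially small error into the leading polynomial term for $\epsilon\le\epsilon_0$ small enough. Then convert to the metric $\ti\om_\epsilon$ by the uniform equivalence from (c).

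For the lower bound, I would evaluate at a point $x_\epsilon\in X$ where $\|\Rm(\om_\epsilon)\|_{C^0(X,\om_\epsilon)}$ is (nearly) attained, and use the reverse triangle inequality together with (b):
\[
\|\Rm(\ti\om_\epsilon)(x_\epsilon)\|_{\om_\epsilon}\ge \|\Rm(\om_\epsilon)(x_\epsilon)\|_{\om_\epsilon}-Ce^{-\delta/\epsilon}\ge c_0\epsilon^{-1}(\log\epsilon^{-1})^{-2}-Ce^{-\delta/\epsilon}.
\]
Since $e^{-\delta/\epsilon}$ decays faster than any polynomial in $\epsilon$, the error is dominated by the leading term for $\epsilon$ small, and one gets $\|\Rm(\ti\om_\epsilon)\|_{C^0(X,\om_\epsilon)}\ge \tfrac{c_0}{2}\epsilon^{-1}(\log\epsilon^{-1})^{-2}$; again converting to $\ti\om_\epsilon$ using (c) yields the stated bound.

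There is essentially no genuine obstacle: all the hard analytic content has already been carried out in Theorem \ref{Thm: convergence of higher order estimate for CY} and in Gross-Wilson's construction of $\om_\epsilon$. The only minor care needed is to ensure the uniform equivalence of metrics is applied correctly to the $(0,4)$ (or $(1,3)$) tensor $\Rm$—passing between $\|\cdot\|_{\om_\epsilon}$ and $\|\cdot\|_{\ti\om_\epsilon}$ involves four contractions, each of which costs at most a bounded multiplicative constant, so the two norms of $\Rm$ remain comparable globally with constants independent of $\epsilon$.
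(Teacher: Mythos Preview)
Your proposal is correct and follows exactly the approach indicated in the paper: the authors state that the corollary follows ``immediately'' by combining \eqref{Equ: convergence of Rm in CY case} with the two-sided bound \eqref{Equ: bound on Rm of reference metrics}, which is precisely the triangle/reverse-triangle inequality argument you outline, together with the uniform equivalence of $\ti\om_\epsilon$ and $\om_\epsilon$ to transfer norms.
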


The proof of Theorem \ref{Thm: convergence of higher order estimate for CY} is an application of our ``Boundedness Implies Convergence" (``BIC" for short) principle developed in \cite{JS}. Our method also works in other situations. For example, in \cite{HSVZ} Hein-Sun-Viaclovsky-Zhang studied other types of degenerations of Calabi-Yau metrics on K3 surfaces. From their construction of approximation metrics, one can see that our proof also works in their situation, therefore also gives similar global higher order estimates.    

As an application of Theorem \ref{Thm: convergence of higher order estimate for CY}, and also motivated by the work \cite{HSVZ}, we study the blow-up limit of $\tilde\om_\epsilon$ at singular fibers. We have the following result: (The precise definition of the coordinates $u, y_1, y_2$ is given in section 4.)

\begin{theorem}\label{Thm: blow up limits at singular fiber}
Assume as above, and let $p_0\in X_{p_i}$ be a point on the singular fibre. We have:
\begin{enumerate}
\item [(1)](Region (1): Bubbling regions) If there exists a sequence $\epsilon_k\to 0$ and a uniform $R_0>0$ such that 
$$d_{\epsilon_k^{-1}g_{\epsilon_k}}(p_0, \ti{p}_i)\leq R_0\cdot\left(\frac{1}{2\pi}\mathrm{log}\left(\epsilon_k^{-1}\right)\right)^{-\frac{1}{2}},$$
 set $\ti{g}^{\#}_k=\frac{1}{2\pi}\mathrm{log}\left(\epsilon_k^{-1}\right)\cdot\epsilon_k^{-1}\ti{g}_{\epsilon_k}$, then we have
$$\left(X, \ti{g}^{\#}_k, p_0\right)\overset{C^{\infty}-Cheeger-Gromov}{\xrightarrow{\hspace*{3cm}}}\left(\mathbb{R}^4, \ti{g}_{\infty}, p_{\infty}\right),$$
where $\ti{g}_{\infty}$ is the standard Ricci-flat Taub-NUT metric on $\mathbb{R}^4$ with origin $0_{\mathbb{R}^4}$, and $p_{\infty}$ is some point on $\mathbb{R}^4$.
\item [(2)](Region (2): Neck region)If there exists a sequence $\epsilon_k\to 0$ such that
$$d_{\epsilon_k^{-1}g_{\epsilon_k}}(p_0, \ti{p}_i)\cdot\left(\frac{1}{2\pi}\mathrm{log}\left(\epsilon_k^{-1}\right)\right)^{\frac{1}{2}}\to \infty,~ d_{\epsilon_k^{-1}g_{\epsilon_k}}(p_0, \ti{p}_i)\cdot\left(\frac{1}{2\pi}\mathrm{log}\left(\epsilon_k^{-1}\right)\right)^{-\frac{1}{2}}\to 0,$$
then there exists some $r_k\to 0$ such that if we set 
$$\ti{g}^{\#}_k=d_{\epsilon_k^{-1}g_{\epsilon_k}}(p_0, \ti{p}_i)^{-2}\cdot\epsilon_k^{-1}\ti{g}_{\epsilon_k},~W_k=\left\{(u, y_1, y_2)|~u^2+y_1^2+y_2^2\leq \epsilon_k^2r_k^2\right\},$$
then we have
$$\left(X\backslash \bar{\pi}^{-1}(W_k), \ti{g}^{\#}_k, p_0\right)\overset{GH}{\xrightarrow{\hspace*{1cm}}}\left(\mathbb{R}^3\backslash\left\{0_{\mathbb{R}^3}\right\}, g_{\mathbb{R}^3}, p_{\infty}\right),$$
where $g_{\mathbb{R}^3}$ is the standard Euclidean metric on $\mathbb{R}^3$, and $p_{\infty}$ is some point on $\mathbb{R}^3\backslash\left\{0_{\mathbb{R}^3}\right\}$.
\item [(3)](Region (3): Outer region)If there exists a sequence $\epsilon_k\to 0$ and uniform constants $r_0, C_0 >0$ such that
$$r_0\cdot\left(\frac{1}{2\pi}\mathrm{log}\left(\epsilon_k^{-1}\right)\right)^{\frac{1}{2}}\leq d_{\epsilon_k^{-1}g_{\epsilon_k}}(p_0, \ti{p}_i)\leq C_0\cdot\left(\frac{1}{2\pi}\mathrm{log}\left(\epsilon_k^{-1}\right)\right)^{\frac{1}{2}},$$
then there exists some $r_k\to 0$ such that if we set 
$$\ti{g}^{\#}_k=d_{\epsilon_k^{-1}g_{\epsilon_k}}(p_0, \ti{p}_i)^{-2}\cdot\epsilon_k^{-1}\ti{g}_{\epsilon_k},~W_k=\left\{(u, y_1, y_2)|~u^2+y_1^2+y_2^2\leq \epsilon_k^2r_k^2\right\},$$
then after passing to a subsequence, we have
$$\left(X\backslash \bar{\pi}^{-1}(W_k), \ti{g}^{\#}_k, p_0\right)\overset{GH}{\xrightarrow{\hspace*{1cm}}}\left(S^1\times \mathbb{R}^2\backslash\left\{0\right\}, g_0, p_{\infty}\right),$$
where $g_0$ is a flat product metric on $S^1\times \mathbb{R}^2$, and $p_{\infty}$ is some point on $S^1\times \mathbb{R}^2\backslash\left\{0\right\}$.
\end{enumerate}
\end{theorem}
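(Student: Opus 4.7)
The overall plan is to deduce the blow-up behaviour of $\ti\om_{\epsilon}$ from that of the Gross-Wilson reference metric $\om_{\epsilon}$, using Theorem \ref{Thm: convergence of higher order estimate for CY} as a black box. On any bounded region in any of the three rescaled metrics $\ti g^{\#}_k$, the rescaling factor is only polynomial in $\epsilon_k^{-1}$, so \eqref{Equ: convergence of higher order estimate for CY} forces $\ti g^{\#}_k$ and the correspondingly rescaled reference metric $g^{\#}_{k}$ to differ by a tensor whose $C^{\infty}$-norm is still $O(e^{-\delta/\epsilon_k})$. Hence the smooth Cheeger-Gromov limit in (1) and the Gromov-Hausdorff limits in (2)-(3) can be computed directly from $\om_{\epsilon}$. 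On a neighbourhood $\bar\pi^{-1}(W)$ of $X_{p_i}$ the reference metric coincides with the Ooguri-Vafa metric $\om_{\mathrm{OV}}$, a Gibbons-Hawking metric with harmonic function
\[
V_{\epsilon}(x) \;=\; \frac{1}{2\pi\epsilon}\log(\epsilon^{-1}) + \frac{1}{|x|} + O(1)
\]
on $\mathbb{R}^{3}\times S^{1}$ (with $(u,y_1,y_2)$ parametrising the $\mathbb{R}^{3}$-factor), and outside $\bar\pi^{-1}(W)$ it agrees with the semi-flat metric $\om_{\mathrm{SF}}$, whose elliptic fibres have complex structure $\tau(u)$ satisfying $\mathrm{Im}\,\tau(u)\sim \tfrac{1}{2\pi}\log|u|^{-1}$ as $u\to 0$.

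For Part (1), I would verify that after rescaling by $\tfrac{1}{2\pi}\log(\epsilon_k^{-1})\cdot\epsilon_k^{-1}$ the harmonic function $V_{\epsilon_k}$ becomes $1+\tfrac{2\pi}{\log(\epsilon_k^{-1})|x|}$ plus terms that vanish uniformly on bounded sets, which is exactly the harmonic function of the standard Taub-NUT metric on $\mathbb{R}^{4}$. The uniform curvature bound from Proposition 3.8 of \cite{GW}, together with standard elliptic regularity, would then upgrade this pointwise convergence of the Gibbons-Hawking data to smooth Cheeger-Gromov convergence at the specified basepoint $p_0$.

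For Parts (2) and (3), I would rescale further by $d^{-2}$ with $d=d_{\epsilon_k^{-1}g_{\epsilon_k}}(p_0,\ti{p}_i)$, and then track how each circle direction of the Gibbons-Hawking or semi-flat model behaves. In Part (2) the base distance $d$ lies between the Taub-NUT scale and the semi-flat scale, so after rescaling the Gibbons-Hawking $S^{1}$-fibre has length of order $(V\cdot d^{2}\epsilon)^{-1/2}\to 0$ while the $\mathbb{R}^{3}$-base is uncollapsed; removing the shrinking neighbourhood $\bar\pi^{-1}(W_k)$ of $\ti{p}_i$ with $r_k\to 0$ chosen so that the collapsing is uniform on the complement yields the Gromov-Hausdorff limit $\mathbb{R}^{3}\setminus\{0\}$ equipped with the flat metric. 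In Part (3) one is in the semi-flat region, where the restriction of $\om_{\mathrm{SF}}$ to a fibre gives two circles with lengths $\sim\sqrt{\epsilon/\mathrm{Im}\,\tau(u)}$ and $\sim\sqrt{\epsilon\cdot\mathrm{Im}\,\tau(u)}$; inserting $d\sim\sqrt{\log\epsilon_k^{-1}}$ and rescaling by $d^{-2}\epsilon_k^{-1}$, the first circle collapses while the second has length tending to a positive constant along a subsequence, and the base disc becomes flat Euclidean of radius $\sim 1$. This produces the flat product limit $S^{1}\times \mathbb{R}^{2}\setminus\{0\}$.

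The main obstacle lies not in identifying the three limits but in controlling the transition region between the Ooguri-Vafa and semi-flat pieces of $\om_{\epsilon}$ uniformly in $\epsilon$: one needs to check that the asymptotics of $V_{\epsilon}$ and of $\tau(u)$ match to a precision that survives the relevant rescalings, so that the three limit spaces arise as Gromov-Hausdorff (respectively Cheeger-Gromov) limits of a single family rather than of two incompatible local models. Equally delicate is the choice of the cutoff radius $r_k$ in (2) and (3): it must be taken small enough that the complement $X\setminus\bar\pi^{-1}(W_k)$ lies in the region where the scaling analysis above is valid, yet large enough that the collapsing $S^{1}$-fibres have uniformly controlled length and the explicit $\delta_k$-approximations to the limit spaces can be written down. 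Once these bookkeeping issues are settled, the rest is direct computation with the explicit Gross-Wilson model formulas.
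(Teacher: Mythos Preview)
Your overall strategy is exactly the paper's: reduce to the reference metric $\om_\epsilon$ via Theorem \ref{Thm: convergence of higher order estimate for CY} (the paper formalises this as Lemma \ref{lem: change metric for CG limit}), then compute each rescaled limit directly from the explicit Gibbons--Hawking formula for the Ooguri--Vafa metric. Your treatment of Region (1) matches the paper essentially word for word.

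However, there is a genuine geometric misunderstanding in your treatment of Region (3), and this propagates into your identification of the ``main obstacle''. The point $p_0$ lies on the singular fibre $X_{p_i}$, i.e.\ at $y=0$ in the base; the semi-flat metric $\om_{\SF}$ is not defined there, so you cannot analyse Region (3) as ``the semi-flat region''. In the paper all three regions are handled entirely inside $f^{-1}(U_1)$, where $\om_\epsilon$ equals the Ooguri--Vafa metric, using the Gibbons--Hawking coordinates $(u,y_1,y_2,\theta)$. In that description the elliptic fibre over $y$ consists of the ``vertical'' Gibbons--Hawking circle $S^1_v$ (length $\sim V_1^{-1/2}$) and the ``horizontal'' circle $S^1_h$ coming from $u\in\mathbb{R}/\epsilon\mathbb{Z}$ (length $\sim V_1^{1/2}$). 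In Region (3) the rescaling $d_k^{-2}\epsilon_k^{-1}$ collapses $S^1_v$ and keeps $S^1_h$ of bounded nonzero length, while the base $y$-plane becomes the $\mathbb{R}^2$ factor; this is the $S^1\times\mathbb{R}^2$ limit. Your two-circle heuristic is therefore correct in spirit, but it must be implemented in the Ooguri--Vafa model, not in $\om_{\SF}$. (There is also a notational slip: in the paper $u$ is a real fibre coordinate and $y=y_1+iy_2$ is the holomorphic base coordinate; your use of $\tau(u)$ with $u\to 0$ suggests you have swapped them.)

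Consequently the ``main obstacle'' you isolate---matching the Ooguri--Vafa and semi-flat asymptotics across the gluing annulus---is simply not present in this argument: for any fixed $R$, balls of $\ti g^{\#}_k$-radius $R$ around $p_0$ lie well inside $f^{-1}(U_1)$ for large $k$, so the gluing region is never seen. The actual bookkeeping the paper carries out is the choice of $r_k$ (they take $r_k=\gamma_k\beta_k^{-1}$ with $\gamma_k^2=d_k\beta_k^{1/2}$ in Region (2) and $r_k=\beta_k^{-3/4}$ in Region (3)) together with an explicit diameter estimate for $\bar\pi^{-1}(W_k)$ via $\int_0^{r_k}V_1^{1/2}\,d\rho$; once those are checked, the convergence is read off from $\beta_k^{-1}V_1(\epsilon_k)\to 1$ on the complement.
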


\begin{remark}
We should note that, by the diameter estimates of Gross-Wilson in \cite[Proposition 3.5]{GW}, the diameter of the singular fiber $X_{p_i}$ under $\epsilon^{-1}g_{\epsilon}$ is exactly of the order $\left(\frac{1}{2\pi}\mathrm{log}\left(\epsilon^{-1}\right)\right)^{\frac{1}{2}}$, hence the three regions in Theorem \ref{Thm: blow up limits at singular fiber} exhaust all the possibilities. 
\end{remark}

The paper is organized as follows: In section 2, we summarize the estimates of Gross-Wilson \cite{GW} that we shall use, and improve their local $C^2$- estimate of the K\"ahler potential to a global exponential decay estimate. This is a crucial step for the application of our ``BIC"-principle in \cite{JS}. Then in section 3, we derive global higher order bounds for the metrics and the curvature tensors. Though the constants bounding these tensors blow up as polynomials of $1/\epsilon$, the exponential decay of section 2 makes the BIC principle applicable. This finishes the proof of Theorem \ref{Thm: convergence of higher order estimate for CY}. Finally in section 4, we prove Theorem \ref{Thm: blow up limits at singular fiber}.   

{\bf Acknowledgements.} The authors would like to thank Zhenlei Zhang, Jian Song and Ruobing Zhang for their interest in this work and for helpful discussions.


\section{Summary of \cite{GW} and improved global $C^2$ estimate}\label{sec: improved c0 estimate}

First we recall some known results in \cite{GW}. As before, we let  $f:X\to B=\mathbb{C}P^1$ be an elliptic  $K3$ surface with 24 singular fibers of Kodaira type $I_1$. Let $p_1, \dots, p_{24}\in B$ be the images of the singular fibers.  Then \cite[Theorems 4.4]{GW} implies the existence of a family of almost Ricci-flat metrics $\om_\epsilon$ with good estimates: 

\begin{theorem}[Theorems 4.4, Lemmas 5.2, 5.3 of \cite{GW}]\label{Thm: existence of almost Ricci-flat metrics}
Assume as above. Then there exists some $\epsilon_0> 0$ such that for all $0< \epsilon\leq \epsilon_0$, there are constants $D, C_k, \delta_k > 0$ for each $k$ which are independent of $\epsilon$, such that the following hold.

For all $0< \epsilon\leq \epsilon_0$, for any class $[\om_{\epsilon}]$ on $X$ with $[\om_{\epsilon}]\cdot X_b=\epsilon$, there exists a K\"ahler metric $\om_{\epsilon}$ representing $[\om_{\epsilon}]$ on $X$ such that the following properties hold.
\begin{enumerate}
\item [(1)]Set $F_{\epsilon}=\mathrm{log}\left(\frac{\Om\bigwedge\ov{\Om}}{2\om_{\epsilon}^2}\right)$, then we have
\begin{equation}\label{Equ: bound on F_epsilon}
\|F_{\epsilon}\|_{C^k(X,\om_{\epsilon})}\leq C_ke^{-\frac{\delta_k}{\epsilon}},
\end{equation}which further implies
\begin{equation}\label{Equ: bound on Ric of reference metrics}
 \|\Ric(\om_{\epsilon})\|_{C^k(X,\om_{\epsilon})}\leq C_ke^{-\frac{\delta_k}{\epsilon}}.
\end{equation}
\item [(2)]We use the $\Rm$ to denote the Riemannian curvature tensor, then we have
\begin{equation}\label{Equ: bound on Rm of reference metrics}
D^{-1}{\epsilon}^{-1}\left(\mathrm{log}({\epsilon}^{-1})\right)^{-2}\leq \|\Rm(\om_{\epsilon})\|_{C^0(X,\om_{\epsilon})}\leq D{\epsilon}^{-1}\mathrm{log}({\epsilon}^{-1}),
\end{equation}
\item [(3)] If  $\ti{\om}_{\epsilon}=\om_{\epsilon}+\ddbar u_{\epsilon}$ is the unique Ricci-flat metric in the class $\left[\om_{\epsilon}\right]$ with $u_\epsilon$ satisfying \eqref{Equ: Ricci-flat metrics 1}, then we have \begin{equation}\label{Equ: C0 bound on u_epsilon}
\|u_{\epsilon}\|_{C^0(X)}\leq Ce^{-\frac{\delta}{\epsilon}},
\end{equation}and \begin{equation}\label{Equ: C2 bound on u_epsilon}
C^{-1}\om_{\epsilon}\leq \ti{\om}_{\epsilon}\leq C\om_{\epsilon},~on~X,
\end{equation}for some constants $C, \delta > 0$ which are independent of $\epsilon$. 
\end{enumerate}
\end{theorem}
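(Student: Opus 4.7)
The theorem collects the key inputs from \cite{GW} (Theorem 4.4 and Lemmas 5.2--5.3), so my plan is to recall how each of the three groups of estimates is produced. The reference metric $\om_\epsilon$ is built by grafting two models: away from the singular fibers, the semi-flat K\"ahler metric $\om_{SF}$ obtained from the special Lagrangian torus fibration on the smooth part of $f$; in a fixed neighborhood of each singular fiber $X_{p_i}$, the explicit Ooguri--Vafa hyperK\"ahler metric $\om_{OV}$ modeling an $I_1$ degeneration. The two models are interpolated with a cutoff supported in a fixed annulus $f^{-1}\bigl(\{\delta\le|b-p_i|\le 2\delta\}\bigr)$, yielding a smooth K\"ahler metric in the prescribed class $[\om_\epsilon]$.

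For item (1), both $\om_{SF}^2$ and $\om_{OV}^2$ are proportional to $\Om\wedge\ov{\Om}$ on their respective domains of definition, so $F_\epsilon$ is identically zero outside the gluing annulus. Expanding each model in the Fourier series along the elliptic fiber, the difference $\om_{OV}-\om_{SF}$ is given by a sum of modified Bessel contributions of type $K_0(n/\epsilon)$, whose all-order $C^k$ norms decay like $e^{-\delta_k/\epsilon}$; this produces \eqref{Equ: bound on F_epsilon}, and then \eqref{Equ: bound on Ric of reference metrics} follows immediately from the identity $\Ric(\om_\epsilon)=\ddbar F_\epsilon$ together with commuting covariant derivatives. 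For item (2), I would compute $\Rm$ explicitly on each model: on $\om_{SF}$ the curvature is uniformly bounded in a fixed compact set of $B$, while on $\om_{OV}$ one finds peak curvature of order $\epsilon^{-1}\log(\epsilon^{-1})$ (the $\log$ coming from the logarithmic term in the Green function defining the Ooguri--Vafa potential), which yields the upper bound. The matching lower bound is realized by evaluation at a definite point on $X_{p_i}$ where the Ooguri--Vafa curvature saturates its size.

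For item (3), existence of $u_\epsilon$ is Yau's theorem applied to \eqref{Equ: Ricci-flat metrics 1}. The global $C^0$ estimate \eqref{Equ: C0 bound on u_epsilon} then follows from Ko{\l}odziej's $L^\infty$ estimate with the exponentially small source $e^{F_\epsilon}-1$, together with the uniform control on $\om_\epsilon$ obtained in (2). The main technical obstacle is the global $C^2$ equivalence \eqref{Equ: C2 bound on u_epsilon}: a naive Aubin--Yau Laplacian estimate loses a term of size $\epsilon^{-1}\log(\epsilon^{-1})$ from the curvature lower bound in (2) and is not directly closable. My plan is to follow \cite{GW} in running the maximum principle on $\log\tr_{\om_\epsilon}\ti{\om}_\epsilon-Au_\epsilon$ with $A$ chosen to match the curvature scale, and then absorb the bad contribution using the explicit product-type structure of the Ooguri--Vafa model near $X_{p_i}$ combined with the exponentially small $C^0$ bound on $u_\epsilon$ just established. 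This last step is the one where the estimates are genuinely delicate; the rest is essentially model computations plus gluing.
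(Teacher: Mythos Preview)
The paper does not give a proof of this theorem; it is stated as a summary of results from \cite{GW} (Theorem 4.4, Lemmas 5.2, 5.3), with only a remark that the higher $C^k$ bounds on $F_\epsilon$ are implicit in the Gross--Wilson proof of Theorem 4.4. Your proposal is therefore a sketch of the original Gross--Wilson argument rather than a comparison with anything proved in this paper.

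That said, your outline of parts (1) and (2) is broadly accurate. One small slip: the curvature lower bound in \eqref{Equ: bound on Rm of reference metrics} is of order $\epsilon^{-1}(\log\epsilon^{-1})^{-2}$, not the same as the upper bound $\epsilon^{-1}\log\epsilon^{-1}$, so calling it a ``matching lower bound'' is misleading; the two come from different points in the Ooguri--Vafa region.

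For part (3) there are two points worth flagging. First, \cite{GW} obtain \eqref{Equ: C0 bound on u_epsilon} via Moser iteration after establishing a uniform Sobolev inequality for $\om_\epsilon$; if you invoke Ko{\l}odziej instead you must check that the relevant pluripotential constants for the degenerating classes $[\om_\epsilon]$ stay bounded as $\epsilon\to 0$, which is not automatic and is not supplied by item (2). Second, your description of how the $C^2$ step closes is off: the Ooguri--Vafa metric is not of product type near $X_{p_i}$, and no special local structure is used. As one sees from the paper's own refinement of the Gross--Wilson argument in Lemma~\ref{Lem: c0 convergence of the metrics}, the mechanism is purely quantitative: the constant $A=c_\epsilon=2R_\epsilon$ in the Aubin--Yau test function grows only polynomially in $\epsilon^{-1}$, while $\|u_\epsilon\|_{C^0}$ is exponentially small by \eqref{Equ: C0 bound on u_epsilon}, so $e^{\pm c_\epsilon u_\epsilon}\to 1$ and the maximum principle closes.
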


\begin{remark}
We shall remark that, although in the statements of Theorems 4.4 of \cite{GW}, Gross-Wilson only state the exponential decay on the $C^0(X)$ norm of $F_{\epsilon}$ and $\Delta F_{\epsilon}$, where the $\Delta$ is with respect to $\om_{\epsilon}$, the same decay estimates for all order derivatives of $F_{\epsilon}$ with respect to $\om_{\epsilon}$ are easily seen to be true by the proofs of Theorem 4.4.
\end{remark}

Now we can prove the following global $C^2$-estimate of $u_\epsilon$, which strengthens \eqref{Equ: C2 bound on u_epsilon} and is a crucial point for the application of the BIC principle :

\begin{lemma}\label{Lem: c0 convergence of the metrics}
Assume as above. Then we have 
\begin{equation}\label{Equ: c0 convergence of the metrics}
\|\ti{\om}_{\epsilon}-\om_{\epsilon}\|_{C^0(X,\om_{\epsilon})}\leq C_0e^{-\frac{\delta_0}{\epsilon}},
\end{equation}
where $C_0, \delta_0>0$ are constants which are independent of $\epsilon$.
\end{lemma}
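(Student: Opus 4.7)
The conclusion is equivalent to $\|\ddbar u_\epsilon\|_{C^0(X,\om_\epsilon)} \leq C_0 e^{-\delta_0/\epsilon}$, since $\ti\om_\epsilon - \om_\epsilon = \ddbar u_\epsilon$. The plan is to combine the exponentially small $C^0$-bound \eqref{Equ: C0 bound on u_epsilon} with a polynomial-in-$\epsilon^{-1}$ bound on a sufficiently high $C^k$-norm of $u_\epsilon$, and then conclude by interpolation---this is essentially the ``BIC'' philosophy of \cite{JS}, applied here to upgrade $C^0$-smallness to $C^2$-smallness.

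The first step is to establish, for each $k \geq 2$, a polynomial bound $\|u_\epsilon\|_{C^k(X,\om_\epsilon)} \leq C_k \epsilon^{-N_k}$ with $N_k$ growing at most linearly in $k$. This comes from a bootstrap of the complex Monge--Amp\`ere equation \eqref{Equ: Ricci-flat metrics 1}: the uniform ellipticity is provided by \eqref{Equ: C2 bound on u_epsilon}, so on balls of the bounded-geometry scale $r_\epsilon \sim (\epsilon/\log(\epsilon^{-1}))^{1/2}$---on which $\om_\epsilon$ is a mild Euclidean perturbation by \eqref{Equ: bound on Rm of reference metrics}---the Evans--Krylov theorem yields an a priori $C^{2,\alpha}$-bound on $u_\epsilon$ with constants polynomial in $r_\epsilon^{-1}$, and iterated linear Schauder, using the exponentially small $C^{k,\alpha}$-bounds on $F_\epsilon$ from \eqref{Equ: bound on F_epsilon}, bootstraps this to every $C^k$-norm at the cost of further polynomial factors in $\epsilon^{-1}$. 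This is where the real work lies: one must carefully track the polynomial blow-up rates in shrinking neighbourhoods of the $24$ singular fibres, using the explicit Ooguri--Vafa-type model underlying the construction in \cite{GW}, since away from those fibres the full $C^k$-exponential decay is already available from the local estimates in \cite{GW}.

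The second step combines the polynomial $C^k$-bound with the Euclidean interpolation
\begin{equation*}
\|D^2 u_\epsilon\|_{L^\infty(B_{r_\epsilon})} \leq C\bigl(\|u_\epsilon\|_{L^\infty(B_{r_\epsilon})}^{1-2/k}\|D^k u_\epsilon\|_{L^\infty(B_{r_\epsilon})}^{2/k} + r_\epsilon^{-2}\|u_\epsilon\|_{L^\infty(B_{r_\epsilon})}\bigr)
\end{equation*}
on each $B_{r_\epsilon}$ and a comparison between Euclidean and $\om_\epsilon$-covariant derivatives, which produces a global estimate on $\|\ddbar u_\epsilon\|_{C^0(X,\om_\epsilon)}$. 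Inserting \eqref{Equ: C0 bound on u_epsilon} and the bound from Step~1, each summand on the right becomes of the form $\epsilon^{-M} e^{-\delta(1-2/k)/\epsilon}$ for a fixed exponent $M$ once $k$ is taken large enough that $2N_k/k$ stabilises. For any $\delta_0 < \delta$, fix such a $k$ with $\delta(1-2/k) > \delta_0$, and then take $\epsilon_0$ small enough that the residual polynomial $\epsilon^{-M}$ is swallowed by the exponential $e^{-(\delta(1-2/k)-\delta_0)/\epsilon}$; this yields \eqref{Equ: c0 convergence of the metrics}.
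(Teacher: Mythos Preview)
Your outline is a genuine alternative route and can be made to work, but it is quite different from what the paper does, and it defers all the difficulty to a step you only sketch.

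The paper's proof is a direct sharpening of Yau's second-order (Laplacian) estimate, as carried out in \cite[Lemma~5.3]{GW}. One applies the maximum principle to $e^{-c_\epsilon u_\epsilon}(2+\Delta u_\epsilon)$ with $c_\epsilon \sim \|\Rm(\om_\epsilon)\|_{C^0}$. At the maximum point the resulting quadratic inequality, after a short algebraic manipulation, yields $2+\Delta u_\epsilon \le 2 + Ce^{-\delta/\epsilon}$, because every deviation from the exact identity $2+\Delta u_\epsilon = 2$ is controlled by $F_\epsilon$ and $\Delta F_\epsilon$, both of which are exponentially small by \eqref{Equ: bound on F_epsilon}. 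The $C^0$-bound \eqref{Equ: C0 bound on u_epsilon} together with $c_\epsilon \le C\epsilon^{-2}$ then transfers this estimate from the maximum point to all of $X$. Finally, since $\tr{\om_\epsilon}{\ti\om_\epsilon} \le 2 + Ce^{-\delta/\epsilon}$ and $\det(\ti\om_\epsilon/\om_\epsilon) = e^{F_\epsilon} = 1 + O(e^{-\delta/\epsilon})$, an elementary linear-algebra lemma (if a positive Hermitian $2\times 2$ matrix $A$ has $\tr A \le 2+\ve$ and $\det A \ge 1-\ve$ then $\|A-\mathrm{Id}\|^2 \le C\ve$) gives \eqref{Equ: c0 convergence of the metrics}. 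No higher-order estimates or interpolation are needed.

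Your approach instead establishes polynomial $C^k$-bounds first and then interpolates against \eqref{Equ: C0 bound on u_epsilon}. Two comments. First, your Step~1 is precisely the content of Section~\ref{sec:global higher-order} of the paper (Propositions \ref{Pro: Polynomial bound of third-order derivatives}--\ref{Pro: Polynomial growth of higher-order derivatives}); note that those arguments rely only on the uniform equivalence \eqref{Equ: C2 bound on u_epsilon}, not on Lemma~\ref{Lem: c0 convergence of the metrics} itself, so there is no circularity---but the work is substantial and you have not actually carried it out. Second, your requirement that $N_k$ grow linearly in $k$ is unnecessary: for any single fixed $k \ge 3$ the interpolation already gives $\epsilon^{-2N_k/k} e^{-\delta(1-2/k)/\epsilon}$, which is exponentially small regardless of how $N_k$ depends on $k$. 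In short, your route is heavier (it front-loads the Section~\ref{sec:global higher-order} machinery to prove what the paper uses as its \emph{input} to that machinery), whereas the paper's direct trace-plus-determinant argument is short and self-contained.
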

\begin{proof}
This is mainly a modification of the proof of \cite[Lemma 5.3]{GW}. 

We adopt the notions from that proof, that is, we let $R_{\epsilon}=\sup_{x\in X}\sup_{i\neq j}\left|R_{i\bar{i}j\bar{j}}\right|(x)$, where $R_{i\bar{i}j\bar{j}}$ is the holomorphic bisectional curvature of the metric $\om_{\epsilon}$ . Also we set
$c_{\epsilon}=2R_{\epsilon}.$
Then by Equation \eqref{Equ: bound on Rm of reference metrics} we have 
\begin{equation}\label{Equ: bound of c_epsilon}
c_{\epsilon}\to \infty,~ c_{\epsilon}+\inf_{x\in X}\inf_{i\neq j}R_{i\bar{i}j\bar{j}}(x)>1,~ 1< c_{\epsilon}\leq C_0\epsilon^{-2},    
\end{equation}
for all small $\epsilon$. Also let 
$$k(x)=\frac{-\inf_{i\neq j}R_{i\bar{i}j\bar{j}}(x)}{R_{\epsilon}},$$
so that $k(x)\leq 1$. Let $\Delta'$ be the Laplacian with respect to $\ti{\om}_{\epsilon}$ and $\Delta$ be the Laplacian with respect to ${\om}_{\epsilon}$. Then we suppose that $e^{-c_{\epsilon}u_{\epsilon}}\left(2+\Delta u_{\epsilon}\right)$ assumes its maximum at the point $x_0\in X$, then the proof of \cite[Lemma 5.3]{GW} yields that at  $x_0$ the estimate
\begin{equation}\label{Equ: maximum principle for Laplacian of u_epsilon 1}
\begin{split}
0\geq &e^{c_{\epsilon}u_{\epsilon}}\Delta'\left(e^{-c_{\epsilon}u_{\epsilon}}\left(2+\Delta u_{\epsilon}\right)\right)\\
\geq&e^{-F_{\epsilon}}(2-k(x_0))R_{\epsilon}\cdot\left[\left(\left(2+\Delta u_{\epsilon}\right)-\frac{2e^{F_{\epsilon}}}{2-k(x_0)}\right)^2-\left(\frac{2e^{F_{\epsilon}}}{2-k(x_0)}\right)^2+\frac{e^{F_{\epsilon}}\left(\Delta F_{\epsilon}+4R_{\epsilon}k(x_0)\right)}{\left(2-k(x_0)\right)R_{\epsilon}}\right],\\
\end{split}     
\end{equation}
and since $|k(x)|\leq 1$, we obtain that
\begin{equation}\label{Equ: maximum principle for Laplacian of u_epsilon 2}
\left|\left(2+\Delta u_{\epsilon}\right)-\frac{2e^{F_{\epsilon}}}{2-k(x_0)}\right|\leq \left|\left(\frac{2e^{F_{\epsilon}}}{2-k(x_0)}\right)^2-\frac{e^{F_{\epsilon}}\left(\Delta F_{\epsilon}+4R_{\epsilon}k(x_0)\right)}{\left(2-k(x_0)\right)R_{\epsilon}}\right|^{\frac{1}{2}}.   \end{equation}
Now, as in the proof of \cite[Lemma 5.3]{GW}, if we are outside the region where the gluing is taking place, then $F_{\epsilon}=0$, so we get
\begin{equation}\label{Equ: maximum principle for Laplacian of u_epsilon 3}
\begin{split}
2+\Delta u_{\epsilon}&\leq \frac{2}{2-k(x_0)}+ \left|\left(\frac{2}{2-k(x_0)}\right)^2-\frac{4k(x_0)}{\left(2-k(x_0)\right)}\right|^{\frac{1}{2}}\\
&=2.\\
\end{split} 
\end{equation}
The point is that, using the bound \eqref{Equ: bound on F_epsilon}, we have almost such estimate if we are on the gluing region. Indeed, we rewrite Equation \eqref{Equ: maximum principle for Laplacian of u_epsilon 2} as
\begin{equation}\label{Equ: maximum principle for Laplacian of u_epsilon 4}
\left|\left(2+\Delta u_{\epsilon}\right)-\frac{2e^{F_{\epsilon}}}{2-k(x_0)}\right|\leq \left|\left(\frac{2e^{F_{\epsilon}}}{2-k(x_0)}\right)^2-\frac{4e^{F_{\epsilon}}k(x)}{2-k(x_0)}-\frac{e^{F_{\epsilon}}\Delta F_{\epsilon}}{\left(2-k(x_0)\right)R_{\epsilon}}\right|^{\frac{1}{2}}.   \end{equation}
But we have
\[
\begin{split}
&\left(\frac{2e^{F_{\epsilon}}}{2-k(x_0)}\right)^2-\frac{4e^{F_{\epsilon}}k(x_0)}{2-k(x_0)}
=\frac{4e^{F_{\epsilon}}\left(1-k(x_0)\right)^2+4e^{F_{\epsilon}}\left(e^{F_{\epsilon}}-1\right)}{\left(2-k(x_0)\right)^2}.
\end{split}
\]
Hence from Equation \eqref{Equ: maximum principle for Laplacian of u_epsilon 4} we obtain
\begin{equation}\label{Equ: maximum principle for Laplacian of u_epsilon 5}
\begin{split}
\left|\left(2+\Delta u_{\epsilon}\right)-\frac{2e^{F_{\epsilon}}}{2-k(x_0)}\right|&\leq \left|\frac{4e^{F_{\epsilon}}\left(1-k(x_0)\right)^2}{\left(2-k(x_0)\right)^2}+\frac{4e^{F_{\epsilon}}\left(e^{F_{\epsilon}}-1\right)}{\left(2-k(x_0)\right)^2}-\frac{e^{F_{\epsilon}}\Delta F_{\epsilon}}{\left(2-k(x_0)\right)R_{\epsilon}}\right|^{\frac{1}{2}}\\
&\leq \left\{\frac{4e^{F_{\epsilon}}\left(1-k(x_0)\right)^2}{\left(2-k(x_0)\right)^2}+\left|\frac{4e^{F_{\epsilon}}\left(e^{F_{\epsilon}}-1\right)}{\left(2-k(x_0)\right)^2}\right|+\left|\frac{e^{F_{\epsilon}}\Delta F_{\epsilon}}{\left(2-k(x_0)\right)R_{\epsilon}}\right|\right\}^{\frac{1}{2}}.\\
\end{split} 
\end{equation}
But since $\left|k(x)\right|\leq 1$, $R_{\epsilon}\to\infty$ and $\|F_{\epsilon}\|_{C^2(X,\om_{\epsilon})}\leq C_2e^{-\frac{\delta_2}{\epsilon}}$ we have
$$\left|\frac{4e^{F_{\epsilon}}\left(e^{F_{\epsilon}}-1\right)}{\left(2-k(x_0)\right)^2}\right|+\left|\frac{e^{F_{\epsilon}}\Delta F_{\epsilon}}{\left(2-k(x_0)\right)R_{\epsilon}}\right|\leq Ce^{-\frac{\delta}{\epsilon}},$$
hence using the simple fact that $(a+b)^{\frac{1}{2}}\leq a^{\frac{1}{2}}+b^{\frac{1}{2}}$ for $a,b\geq 0$, we obtain from Equation \eqref{Equ: maximum principle for Laplacian of u_epsilon 5} that 
\begin{equation}\label{Equ: maximum principle for Laplacian of u_epsilon 6}
\begin{split}
2+\Delta u_{\epsilon}&\leq \frac{2e^{F_{\epsilon}}}{2-k(x)}+ \frac{2e^{\frac{1}{2}F_{\epsilon}}\left(1-k(x)\right)}{2-k(x)}+Ce^{-\frac{\delta}{\epsilon}}\\
&=2e^{F_{\epsilon}}+\left[\frac{2e^{F_{\epsilon}}}{2-k(x)}-2e^{F_{\epsilon}}\right]+ \frac{2e^{\frac{1}{2}F_{\epsilon}}\left(1-k(x)\right)}{2-k(x)}+Ce^{-\frac{\delta}{\epsilon}}\\
&=2e^{F_{\epsilon}}+\frac{2e^{F_{\epsilon}}\left(k(x)-1\right)}{2-k(x)}+ \frac{2e^{\frac{1}{2}F_{\epsilon}}\left(1-k(x)\right)}{2-k(x)}+Ce^{-\frac{\delta}{\epsilon}}\\
&=2e^{F_{\epsilon}}+\frac{2e^{\frac{1}{2}F_{\epsilon}}\left(1-k(x)\right)}{\left(2-k(x)\right)}\cdot\left(1-e^{\frac{1}{2}F_{\epsilon}}\right)+Ce^{-\frac{\delta}{\epsilon}}.\\
\end{split} 
\end{equation}
Use again the bound $\left|k(x)\right|\leq 1$ and $\|F_{\epsilon}\|_{C^0(X)}\leq C_0e^{-\frac{\delta_0}{\epsilon}}$ we get from  \eqref{Equ: maximum principle for Laplacian of u_epsilon 6} that
\[
\begin{split}
2+\Delta u_{\epsilon}\leq 
2+Ce^{-\frac{\delta}{\epsilon}}.\\
\end{split} 
\]
Hence we conclude that no matter whether the maximum point $x_0$ is outside the gluing region or inside the gluing region, we have at $x_0$ the estimate
\begin{equation}\label{Equ: maximum principle for Laplacian of u_epsilon 7}
2+\Delta u_{\epsilon}\leq 2+Ce^{-\frac{\delta}{\epsilon}}.
\end{equation}
for some constants $C,\delta>0$ independent of $\epsilon$. Now we have for any $x\in X$
$$e^{-c_{\epsilon}u_{\epsilon}(x)}\left(2+\Delta u_{\epsilon}(x)\right)\leq e^{-c_{\epsilon}u_{\epsilon}(x_0)}\left(2+\Delta u_{\epsilon}(x_0)\right).$$
Using the estimates \eqref{Equ: C0 bound on u_epsilon} and \eqref{Equ: bound of c_epsilon}, we have  
$e^{\pm c_{\epsilon}u_{\epsilon}}\leq 1+Ce^{-\frac{\delta}{\epsilon}}.$
So we have
\begin{equation}\label{Equ: maximum principle for Laplacian of u_epsilon 8}
\begin{split}
2+\Delta u_{\epsilon}(x)&\leq e^{c_{\epsilon}u_{\epsilon}(x)}e^{-c_{\epsilon}u_{\epsilon}(x_0)}\left(2+\Delta u_{\epsilon}(x_0)\right)\leq 2+Ce^{-\frac{\delta}{\epsilon}}\\
\end{split} 
\end{equation}
for any point $x\in X$. 

Now around $x$, we choose normal coordinates $z_1, z_2$ with respect to $\om_{\epsilon}$ such that $\ti{\om}_{\epsilon}=\om_{\epsilon}+\ddbar u_{\epsilon}$ is diagonalized,  that is
$$\left(\ti{g}_{\epsilon}\right)_{i\bar{j}}=\delta_{i\bar{j}}\left(1+\left(u_{\epsilon}\right)_{i\bar{i}}\right).$$
Denote this positive definite $2\times 2$ Hermitian matrix by $A$, then we have from \eqref{Equ: maximum principle for Laplacian of u_epsilon 8}
$$\mathrm{tr}A\leq 2+Ce^{-\frac{\delta}{\epsilon}}.$$
Also, we have
$$\mathrm{det}A=\frac{\left(\ti{\om}_{\epsilon}\right)^2}{\left({\om}_{\epsilon}\right)^2}=e^{F_{\epsilon}}=1+O\left(e^{-\frac{\delta}{\epsilon}}\right).$$
Now we need the following elementary lemma \cite[Lemma 2.6]{TWY}:
\begin{lemma} \label{Lem: matrix}
Let $A$ be an $n\times n$ positive definite Hermitian matrix such that
$$  \mathrm{tr} A \leq n + \ve, \quad   \det A   \geq 1-\ve,$$
for some $0<\ve<1$.
Then there is a constant $C$ which depends only on $n$ such that
$$\| A- \mathrm{Id} \|^2 \le C \ve,$$
where $\|\cdot\|$ is the Hilbert-Schmidt norm, and $\mathrm{Id}$ is the $n\times n$ identity matrix.
\end{lemma}

By Lemma \ref{Lem: matrix} we obtain
$$\| A- \mathrm{Id} \|^2 \le Ce^{-\frac{\delta}{\epsilon}},$$
which implies that
$$\|\ti{\om}_{\epsilon}-\om_{\epsilon}\|_{C^0(X,\om_{\epsilon})}\leq C_0e^{-\frac{\delta_0}{\epsilon}}.$$
This completes the proof.
\end{proof}


\section{Global Higher-Order Convergence Estimates}\label{sec:global higher-order}

In this section, for a K\"ahler metric $\om$ with its associated Riemannian metric $g$, we use $\nabla^{k,g}$ to denote all possible directions of covariant derivatives (including holomorphic and anti-holomorphic), unless otherwise stated.

\subsection{Global Polynomial Growth of Higher-Order Derivatives}\

We first recall a  lemma \cite[Lemma 3.4]{JS}, which follows by simple and direct computations.
\begin{lemma}\label{Lem: relation between two metrics}
Let $X$ be a K\"ahler manifold. Let $\hat\om, \ti\om$ be any two K\"ahler metrics on $X$ and $\alpha$ be any tensor field on $X$. Then we have for any $k\geq 1$
\begin{equation}\label{Equ: relation between three metrics}
\nabla^{k,\ti g}{\alpha}=\sum_{j\geq 1, i_1+\dots +i_j=k,i_1,\dots ,i_j\geq 0}\nabla^{i_1,\hat{g}}{\beta}*\dots*\nabla^{i_j,\hat{g}}{\beta}.
\end{equation}
where $\beta$ means either the metric $\ti{g}$ or the tensor $\alpha$, and $*$ denotes the tensor contraction by $\ti{g}$.
\end{lemma}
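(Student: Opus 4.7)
The plan is a straightforward induction on $k$, with the base case $k=1$ reducing to the classical comparison of two Chern (equivalently, Levi--Civita) connections on a K\"ahler manifold.

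For the base case, recall that the Chern connection of $\ti\om$ has Christoffel symbols $\ti\Gamma^k_{ij}=\ti g^{k\bar l}\partial_i\ti g_{j\bar l}$, and similarly for $\hat\om$. A direct computation gives $\ti\Gamma^k_{ij}-\hat\Gamma^k_{ij} = \ti g^{k\bar l}\hat\nabla_i\ti g_{j\bar l}$, i.e.\ a $\ti g^{-1}$-contraction of $\hat\nabla\ti g$. In the $*$-notation of the lemma this reads $\nabla^{\ti g}\alpha = \hat\nabla\alpha + (\hat\nabla\ti g)*\alpha$, which is exactly the right-hand side of \eqref{Equ: relation between three metrics} at $k=1$ (the two term types being $(j;i_1)=(1;1)$ with $\beta=\alpha$, and $(j;i_1,i_2)=(2;1,0)$ with $\beta$ being $\ti g$ then $\alpha$).

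For the inductive step, assume the formula at level $k$ and apply $\nabla^{\ti g}$ to both sides, using the Leibniz rule on the right. It suffices to describe the effect on a single factor $\hat\nabla^{i_l}\beta$: applying the base-case identity to the tensor $\hat\nabla^{i_l}\beta$ gives $\nabla^{\ti g}(\hat\nabla^{i_l}\beta) = \hat\nabla^{i_l+1}\beta + (\hat\nabla\ti g)*\hat\nabla^{i_l}\beta$. Thus each derivative either raises one factor's order by $1$ (keeping the number of factors $j$ fixed) or introduces a new factor $\hat\nabla\ti g$ of order $1$ (increasing $j$ by $1$). In either case $i_1+\cdots+i_j$ grows by exactly $1$, so we land in the claimed form at level $k+1$. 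Moreover, since $\nabla^{\ti g}\ti g = 0$ and $\nabla^{\ti g}\ti g^{-1}=0$, the contractions $*$ (taken with $\ti g$) commute with the covariant derivative and generate no additional terms.

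There is no serious obstacle here; the lemma is purely a combinatorial bookkeeping statement, which is why the authors describe it as following by simple and direct computations. The only subtleties worth flagging are that the Leibniz rule generates many monomials with identical multi-index $(j;i_1,\ldots,i_j)$, which are collected into a single term in \eqref{Equ: relation between three metrics} with an implicit positive integer coefficient absorbed into the $*$-notation, and that indices $i_l=0$ simply correspond to undifferentiated factors of $\ti g$ or $\alpha$ inside the product.
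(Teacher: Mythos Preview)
Your induction is correct and is precisely the ``simple and direct computation'' the paper alludes to; the paper itself does not spell out a proof but merely cites \cite[Lemma 3.4]{JS}. The base case via the difference tensor $\ti\Gamma-\hat\Gamma=\ti g^{-1}*\hat\nabla\ti g$ and the Leibniz-rule inductive step (using $\nabla^{\ti g}\ti g=0$ so that the $\ti g$-contractions pass through) are exactly what is needed.
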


We start with the following general proposition. 
\begin{proposition}\label{Pro: polynomial bound reference metrics}
Let $X^n$ be any compact K\"ahler manifold. Suppose we have a family of K\"ahler metrics $\om_{\epsilon}$ on $X$ such that the following holds:
\begin{equation}\label{Equ: initial bound of reference metrics}
\left\{
\begin{aligned}
      &\|\Ric(\om_{\epsilon})\|_{C^k(X,\om_{\epsilon})}\leq C_k,\\
      &\|\Rm(\om_{\epsilon})\|_{C^0(X,\om_{\epsilon})}\leq C_0\epsilon^{-n_0},\\
\end{aligned}
\right.
\end{equation}
where $n_0, C_k, (k\geq 0)$ are positive constants which are independent of $\epsilon$. Then we have
\begin{equation}\label{Equ: polynomial bound on Rm of reference metrics}
\|\Rm(\om_{\epsilon})\|_{C^k(X,\om_{\epsilon})}\leq D_k\epsilon^{-n_k},
\end{equation}
where $n_k, D_k$ are positive constants which are independent of $\epsilon$.
\end{proposition}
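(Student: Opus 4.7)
The plan is to argue by induction on $k$, the base case $k=0$ being the hypothesis. For the inductive step, assume polynomial bounds $\|\Rm(\omega_\epsilon)\|_{C^j(X,\omega_\epsilon)} \leq D_j \epsilon^{-n_j}$ have been established for all $j \leq k-1$. The key ingredient is the contracted second Bianchi identity, which on a K\"ahler manifold yields the schematic elliptic identity
$$\Delta_{\omega_\epsilon} \Rm(\omega_\epsilon) = \nabla^2 \Ric(\omega_\epsilon) + \Rm(\omega_\epsilon) * \Rm(\omega_\epsilon),$$
with $*$ denoting contractions in $\omega_\epsilon$. Applying $\nabla^{k-2}$ and commuting covariant derivatives introduces additional products $\nabla^a \Rm * \nabla^b \Rm$ with $a+b \leq k-2$, and produces an elliptic PDE for $\nabla^{k-2}\Rm$ whose right-hand side is, via the induction hypothesis and the bound $\|\Ric\|_{C^k(X,\omega_\epsilon)} \leq C_k$, controlled in $C^0$ by a polynomial in $\epsilon^{-1}$.

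To upgrade this to pointwise derivative bounds, combine the identity above with the Bochner formula for $u := |\nabla^{k-2}\Rm|^2$, obtaining a differential inequality of the schematic form
$$\Delta u \geq 2|\nabla^{k-1}\Rm|^2 - C\epsilon^{-N_k}\,u - C\epsilon^{-N_k}$$
for some $N_k$ polynomial in $n_0$ and $k$. Integrating over the compact manifold $X$ then yields an $L^2$-bound on $\nabla^{k-1}\Rm$ that grows at most polynomially in $\epsilon^{-1}$. Pointwise bounds can be extracted either by Moser iteration on the same inequality or by a Calabi-style maximum principle argument applied to a weighted combination of $|\nabla^j \Rm|^2$ for $j \leq k-2$; inductive closure then gives $|\nabla^{k,g_\epsilon} \Rm(g_\epsilon)|_{g_\epsilon} \leq D_k \epsilon^{-n_k}$ for some $n_k$ polynomial in $k$ and $n_0$.

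The main technical point is the $L^2$-to-$C^0$ step. Moser iteration requires control on the Sobolev constant of $(X,\omega_\epsilon)$, which in the collapsing setting of our applications may degenerate as $\epsilon\to 0$; however, since we only require polynomial (and not uniform) estimates, it suffices that this degeneration be itself polynomial in $\epsilon^{-1}$. This can be verified either directly or by rescaling to the unit curvature scale $\hat g_\epsilon := \epsilon^{-n_0} g_\epsilon$, where the hypothesis gives $|\Rm(\hat g_\epsilon)|_{\hat g_\epsilon} \leq C_0$ and $|\nabla^{j,\hat g_\epsilon} \Ric(\hat g_\epsilon)|_{\hat g_\epsilon}$ decaying in positive powers of $\epsilon$, and applying interior Schauder estimates in harmonic coordinate charts. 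This is the only step where any care beyond the standard Bianchi--Bochner computations is needed, and it does not affect the polynomial form of \eqref{Equ: polynomial bound on Rm of reference metrics}.
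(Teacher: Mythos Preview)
Your overall plan (induction on $k$, the Bianchi identity $\Delta\Rm=\nabla^2\Ric+\Rm*\Rm$, commuting derivatives, and the Bochner formula for $|\nabla^{j}\Rm|^2$) matches the paper's setup exactly. The gap is in the $L^2$-to-$C^0$ step. The proposition is stated for an \emph{arbitrary} family of compact K\"ahler metrics satisfying only the two curvature bounds \eqref{Equ: initial bound of reference metrics}; no information is given on volume, diameter, Sobolev constant, or injectivity radius. Moser iteration therefore cannot be used without an additional hypothesis. Your rescaling suggestion does not fix this: passing to $\hat g_\epsilon=\epsilon^{-n_0}g_\epsilon$ gives $|\Rm(\hat g_\epsilon)|\leq C_0$, but bounded curvature alone does not yield harmonic coordinate charts of uniform size---you still need a lower bound on the injectivity radius (or an explicit passage to local universal covers/frame bundles, which you do not invoke). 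So as written the argument is incomplete at exactly the step you flag as ``the only step where any care is needed.''

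The paper bypasses this entirely by the ``Calabi-style maximum principle'' you mention in passing, and this is the correct route. No $L^2$ theory is needed: from the Bochner identity one has, under the induction hypothesis for $j\leq k-1$,
\[
(-\Delta_{\omega_\epsilon})|\nabla^{k-1}\Rm|^2 \leq -|\nabla^{k}\Rm|^2 + C_k\epsilon^{-n_k},\qquad
(-\Delta_{\omega_\epsilon})|\nabla^{k}\Rm|^2 \leq C_k\epsilon^{-n_k}|\nabla^{k}\Rm|^2 + C_k\epsilon^{-n_k}.
\]
The weighted combination that works is $Q=\epsilon^{n_k}|\nabla^{k}\Rm|^2+(C_k+1)|\nabla^{k-1}\Rm|^2$ (note: levels $k$ and $k-1$, not $j\leq k-2$ as you wrote). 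Then $(-\Delta)Q\leq -|\nabla^k\Rm|^2+C\epsilon^{-n_k}$, and evaluating at the maximum of $Q$ on the compact manifold $X$ gives $|\nabla^k\Rm|^2\leq C\epsilon^{-n_k}$ at that point, hence everywhere after enlarging $n_k$. Compactness is the only global input. If you replace your paragraph on Sobolev constants with this two-line maximum-principle computation, your proof is complete and essentially identical to the paper's.
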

\begin{proof}
In general, given a K\"ahler metric $\om$ on $X$ associated with Riemannian metric $g$, we have (by second Bianchi identity and Ricci identity)
\begin{equation}\label{Equ: evolution of general Ric}
\Delta R_{i\bar{j}k\bar{l}}=\nabla_{\bar{l}}\nabla_{k}R_{i\bar{j}}+\Rm*\Rm,
\end{equation}
where $*$ denotes tensor contraction by $g$ and also multiplication by some absolute constants, and $\Delta$ is with respect to $\om$. For any smooth tensor field $\alpha$, and any non-negative integer $k$, we always have
\begin{equation}\label{Equ: Laplacian of higher-order derivatives of alpha}
\begin{split}
\Delta\left(\nabla^{k,g}\alpha\right)=\nabla^{k,g}\left(\Delta\alpha\right)+\sum_{i_1+i_2=k,i_1,i_2\geq 0}\nabla^{i_1,g}\Rm *\nabla^{i_2,g}\alpha,
\end{split}    
\end{equation}
where $*$ denotes tensor contraction by $g$. Indeed, for $k=0$, this is trivial. Assume this is true for $0, \dots, k$ with $k\geq 0$. Then for $k+1$, we have
\[
\begin{split}
    &\Delta\left(\nabla^{g}_l\nabla^{k,g}\alpha\right)\\
    =&g^{a\bar{b}}\nabla^{g}_a\nabla^{g}_{\bar{b}}\left(\nabla^{g}_l{\nabla^{k,g}}\alpha\right)\\
    =&g^{a\bar{b}}\nabla^{g}_a\left(\nabla^{g}_l\nabla^{g}_{\bar{b}}{\nabla^{k,g}}\alpha+\Rm*\nabla^{k,g}\alpha\right)\\
    =&\nabla^{g}_l\left(\Delta\nabla^{k,g}\alpha\right)+\sum_{i_1+i_2=k+1,i_1,i_2\geq 0}\nabla^{i_1,g}\Rm *\nabla^{i_2,g}\alpha\\
    =&\nabla^{g}_l\left(\nabla^{k,g}\left(\Delta\alpha\right)+\sum_{i_1+i_2=k,i_1,i_2\geq 0}\nabla^{i_1,g}\Rm *\nabla^{i_2,g}\alpha\right)+\sum_{i_1+i_2=k+1,i_1,i_2\geq 0}\nabla^{i_1,g}\Rm *\nabla^{i_2,g}\alpha\\
    =&\nabla^{k+1,g}\left(\Delta\alpha\right)+\sum_{i_1+i_2=k+1,i_1,i_2\geq 0}\nabla^{i_1,g}\Rm *\nabla^{i_2,g}\alpha.\\
\end{split}
\]
 Same argument works for $\Delta\left(\nabla^{g}_{\bar{l}}\nabla^{k,g}\alpha\right)$. This establish \eqref{Equ: Laplacian of higher-order derivatives of alpha}. 
 
 Combining \eqref{Equ: evolution of general Ric} and \eqref{Equ: Laplacian of higher-order derivatives of alpha} with $\alpha=\Rm$, we obtain
\begin{equation}\label{Equ: Laplacian of higher-order derivatives of Rm 2}
\begin{split}
\Delta\left(\nabla^{k,g}\Rm\right)&=\nabla^{k,g}\left(\Delta\Rm\right)+\sum_{i_1+i_2=k,i_1,i_2\geq 0}\nabla^{i_1,g}\Rm *\nabla^{i_2,g}\Rm\\
&=\nabla^{k,g}\left(\nabla^{2,g}\Ric+\Rm*\Rm\right)+\sum_{i_1+i_2=k,i_1,i_2\geq 0}\nabla^{i_1,g}\Rm *\nabla^{i_2,g}\Rm\\
&=\nabla^{k+2,g}\Ric+\sum_{i_1+i_2=k,i_1,i_2\geq 0}\nabla^{i_1,g}\Rm *\nabla^{i_2,g}\Rm.\\
\end{split}
\end{equation}
From this, we have for all $k\geq 0$ 
\begin{equation}\label{Equ: Laplacian of higher-order derivatives of Rm 3}
\begin{split}
&\left(-\Delta\right)\left(\left|\nabla^{k,g}\Rm\right|_{\om}^2\right)\\
=&-\left|\nabla^{k+1,g}\Rm\right|_{\om}^2-2\mathrm{Re}\left\{\left<\Delta\left(\nabla^{k,g}\Rm\right), \nabla^{k,g}\Rm\right>_{\om}\right\}+\Rm*\nabla^{k,g}\Rm *\nabla^{k,g}\Rm\\
=&-\left|\nabla^{k+1,g}\Rm\right|_{\om}^2+\nabla^{k+2,g}\Ric*\nabla^{k,g}\Rm +\sum_{i_1+i_2+i_3=2k,0\leq i_1,i_2,i_3\leq k}\nabla^{i_1,g}\Rm*\nabla^{i_2,g}\Rm*\nabla^{i_3,g}\Rm.\\
\end{split}
\end{equation}

Now we prove \eqref{Equ: polynomial bound on Rm of reference metrics} by induction. The $k=0$ case is true by condition \eqref{Equ: initial bound of reference metrics}. Now assume \eqref{Equ: polynomial bound on Rm of reference metrics} is true for $0,\dots, k-1$ with $k\geq 1$, then we prove \eqref{Equ: polynomial bound on Rm of reference metrics} for $k$. Using \eqref{Equ: Laplacian of higher-order derivatives of Rm 3} with $\om=\om_{\epsilon}$ we get (We remind our readers that the constants $C_k, n_k$ may differ from line to line.)
\begin{equation}\label{Equ: Laplacian of higher-order derivatives of Rm 4}
\begin{split}
&\left(-\Delta_{\om_{\epsilon}}\right)\left(\left|\nabla^{k,g_{\epsilon}}\Rm(\om_{\epsilon})\right|_{\om_{\epsilon}}^2\right)\\
\leq& \nabla^{k+2,g_{\epsilon}}\Ric(\om_{\epsilon})*\nabla^{k,g_{\epsilon}}\Rm(\om_{\epsilon})
+\sum_{i_1+i_2+i_3=2k,0\leq i_1,i_2,i_3\leq k}\nabla^{i_1,g_{\epsilon}}\Rm(\om_{\epsilon})*\nabla^{i_2,g_{\epsilon}}\Rm(\om_{\epsilon})*\nabla^{i_3,g_{\epsilon}}\Rm(\om_{\epsilon})\\
\leq&C_{k+2}\left|\nabla^{k,g_{\epsilon}}\Rm(\om_{\epsilon})\right|_{\om_{\epsilon}}+\left[C_{k}\epsilon^{-n_0}\left|\nabla^{k,g_{\epsilon}}\Rm(\om_{\epsilon})\right|_{\om_{\epsilon}}^2+C_{k}\epsilon^{-n_k}\left|\nabla^{k,g_{\epsilon}}\Rm(\om_{\epsilon})\right|_{\om_{\epsilon}}+C_{k}\epsilon^{-n_k}\right]\\
\leq&C_{k}\epsilon^{-n_k}\left|\nabla^{k,g_{\epsilon}}\Rm(\om_{\epsilon})\right|_{\om_{\epsilon}}^2+C_{k}\epsilon^{-n_k}.\\
\end{split}
\end{equation}
Similarly we have
\begin{equation}\label{Equ: Laplacian of higher-order derivatives of Rm 5}
\left(-\Delta_{\om_{\epsilon}}\right)\left(\left|\nabla^{k-1,g_{\epsilon}}\Rm(\om_{\epsilon})\right|_{\om_{\epsilon}}^2\right)\leq -\left|\nabla^{k,g_{\epsilon}}\Rm(\om_{\epsilon})\right|_{\om_{\epsilon}}^2+C_{k}\epsilon^{-n_k}.
\end{equation}
Set
$$Q:=\epsilon^{n_k}\left|\nabla^{k,g_{\epsilon}}\Rm(\om_{\epsilon})\right|_{\om_{\epsilon}}^2+(C_k+1)\left|\nabla^{k-1,g_{\epsilon}}\Rm(\om_{\epsilon})\right|_{\om_{\epsilon}}^2,$$
then we obtain
\[
\begin{split}
\left(-\Delta_{\om_{\epsilon}}\right)Q
\leq&\left\{C_{k}\left|\nabla^{k,g_{\epsilon}}\Rm(\om_{\epsilon})\right|_{\om_{\epsilon}}^2+C_{k}\right\}+\left(C_k+1\right)\left\{-\left|\nabla^{k,g_{\epsilon}}\Rm(\om_{\epsilon})\right|_{\om_{\epsilon}}^2+C_{k}\epsilon^{-n_k}\right\}\\
\leq&-\left|\nabla^{k,g_{\epsilon}}\Rm(\om_{\epsilon})\right|_{\om_{\epsilon}}^2+C_{k}\epsilon^{-n_k}.\\
\end{split}
\]
Now at a maximum point $x_0\in X$ of $Q$, we have
\[
\begin{split}
0\leq\left(-\Delta_{\om_{\epsilon}}\right)Q(x_0)
\leq-\left|\nabla^{k,g_{\epsilon}}\Rm(\om_{\epsilon})\right|_{\om_{\epsilon}}^2(x_0)+C_{k}\epsilon^{-n_k},
\end{split}
\]
which implies that
$$\left|\nabla^{k,g_{\epsilon}}\Rm(\om_{\epsilon})\right|_{\om_{\epsilon}}^2(x_0)\leq C_{k}\epsilon^{-n_k}.$$
By induction hypothesis we obtain
$$Q\leq Q(x_0)\leq C_{k}\epsilon^{-n_k},$$
by modifying $C_k$ and $n_k$.
This implies
$$\left|\nabla^{k,g_{\epsilon}}\Rm(\om_{\epsilon})\right|_{\om_{\epsilon}}^2\leq C_{k}\epsilon^{-n_k},$$
by modifying $C_k$ and $n_k$ again.
\end{proof}

Applying Proposition \ref{Pro: polynomial bound reference metrics} to the almost Ricci-flat metrics $\om_\epsilon$, we obtain the higher order polynomial bounds of the curvature tensor:
\begin{corollary}\label{Cor: polynomial bound reference metrics 2}
Assume as in Theorem \ref{Thm: existence of almost Ricci-flat metrics}. Then we have 
\begin{equation}\label{Equ: polynomial bound on Rm of reference metrics 2}
\|\Rm(\om_{\epsilon})\|_{C^k(X,\om_{\epsilon})}\leq C_k\epsilon^{-n_k},
\end{equation}
where $C_k,n_k$ are positive constants which are independent of $\epsilon$.
\end{corollary}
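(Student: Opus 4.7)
The plan is to simply apply Proposition \ref{Pro: polynomial bound reference metrics} to the reference metrics $\om_\epsilon$ from Theorem \ref{Thm: existence of almost Ricci-flat metrics}, which requires only that I verify the two hypotheses \eqref{Equ: initial bound of reference metrics}. There is no new analytic input here: the entire content was already encoded in Proposition \ref{Pro: polynomial bound reference metrics}, and this corollary is the specialization to Gross--Wilson's family.

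For the first hypothesis, I would invoke \eqref{Equ: bound on Ric of reference metrics}, which gives
\[
\|\Ric(\om_\epsilon)\|_{C^k(X,\om_\epsilon)} \leq C_k e^{-\delta_k/\epsilon} \leq C_k
\]
for all small $\epsilon$, so the uniform Ricci bound to all orders is satisfied (with, in fact, exponential decay, but only boundedness is needed). For the second hypothesis, I would use the upper bound in \eqref{Equ: bound on Rm of reference metrics},
\[
\|\Rm(\om_\epsilon)\|_{C^0(X,\om_\epsilon)} \leq D \epsilon^{-1} \log(\epsilon^{-1}).
\]
Since $\log(\epsilon^{-1}) \leq C \epsilon^{-1}$ for $\epsilon$ small (or indeed for any power we like), this is dominated by $C_0 \epsilon^{-n_0}$ with, say, $n_0 = 2$. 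Thus both conditions in \eqref{Equ: initial bound of reference metrics} hold.

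With these two bounds in hand, Proposition \ref{Pro: polynomial bound reference metrics} applies verbatim and produces the desired estimate
\[
\|\Rm(\om_\epsilon)\|_{C^k(X,\om_\epsilon)} \leq C_k \epsilon^{-n_k}
\]
with constants $C_k, n_k$ independent of $\epsilon$. The only ``obstacle'' is bookkeeping: one should be mindful that the $n_k$ produced by the induction in Proposition \ref{Pro: polynomial bound reference metrics} may grow with $k$, but this is precisely the form of the conclusion we want, and it is exactly what the BIC principle will later combine with the exponential decay from Lemma \ref{Lem: c0 convergence of the metrics} to produce Theorem \ref{Thm: convergence of higher order estimate for CY}.
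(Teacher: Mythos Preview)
Your proposal is correct and is exactly the paper's own argument: the corollary is stated immediately after Proposition \ref{Pro: polynomial bound reference metrics} and is obtained simply by applying that proposition to the metrics $\om_\epsilon$, using \eqref{Equ: bound on Ric of reference metrics} and \eqref{Equ: bound on Rm of reference metrics} to verify the hypotheses \eqref{Equ: initial bound of reference metrics}. There is nothing to add.
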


Now we turn to the higher order estimates for the Ricci-flat metrics $\ti{\om_\epsilon}$. 

\begin{proposition}\label{Pro: Polynomial bound of third-order derivatives}
Assume as in the previous section. Then we have
\begin{equation}\label{Equ: Polynomial bound of third-order derivatives}
\|\ti\om_{\epsilon}\|_{C^1(X,\om_{\epsilon})}\leq C_1\epsilon^{-n_1},
\end{equation}
where $C_1, n_1$ are positive constants which are independent of $\epsilon$.
\end{proposition}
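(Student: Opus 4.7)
The plan is to run a Calabi-Yau style third order estimate, keeping careful track of the polynomial $\epsilon$-dependence inherited from Corollary \ref{Cor: polynomial bound reference metrics 2}. The target quantity is the Calabi tensor
\[
S := |\Psi|^2_{\ti\om_\epsilon}, \qquad \Psi^k_{ij} := \Gamma(\ti g_\epsilon)^k_{ij} - \Gamma(g_\epsilon)^k_{ij},
\]
which is built out of the first covariant derivative of $\ti g_\epsilon$ with respect to $g_\epsilon$. Thanks to the uniform equivalence \eqref{Equ: C2 bound on u_epsilon}, a polynomial bound $S \leq C\epsilon^{-2n_1}$ is equivalent to the desired $C^1$-bound $\|\ti\om_\epsilon\|_{C^1(X,\om_\epsilon)} \leq C_1 \epsilon^{-n_1}$, so the whole game reduces to controlling $S$ by the maximum principle.

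Next I would carry out Calabi's classical computation of $\Delta_{\ti\om_\epsilon} S$ in the Ricci-flat setting. In our situation $\Ric(\ti\om_\epsilon)=0$, and all the $\dbar F_\epsilon$-type corrections coming from \eqref{Equ: Ricci-flat metrics 1} are controlled by \eqref{Equ: bound on F_epsilon}, so the Calabi identity reads schematically
\[
\Delta_{\ti\om_\epsilon} S \;\geq\; |\nabla^{\ti g_\epsilon}\Psi|^2_{\ti\om_\epsilon} + |\ov\nabla^{\ti g_\epsilon}\Psi|^2_{\ti\om_\epsilon} - C\,|\Rm(\om_\epsilon)|_{\om_\epsilon}\cdot S - C\,|\nabla^{g_\epsilon}\Rm(\om_\epsilon)|_{\om_\epsilon}\cdot S^{1/2} - C e^{-\delta/\epsilon}.
\]
Invoking Corollary \ref{Cor: polynomial bound reference metrics 2} and \eqref{Equ: C2 bound on u_epsilon}, this collapses to $\Delta_{\ti\om_\epsilon} S \geq -C\epsilon^{-m}\left(S + S^{1/2} + 1\right)$ for some integer $m$. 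In parallel I would recall the companion inequality from Yau's $C^2$-computation,
\[
\Delta_{\ti\om_\epsilon}(\tr_{\om_\epsilon}\ti\om_\epsilon) \;\geq\; c_0\, S - C\,|\Rm(\om_\epsilon)|_{\om_\epsilon}\cdot \tr_{\om_\epsilon}\ti\om_\epsilon \;\geq\; c_0\, S - C\epsilon^{-n_0},
\]
where the uniform positivity $c_0>0$ of the $S$-coefficient is guaranteed by the two-sided bound \eqref{Equ: C2 bound on u_epsilon}.

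The two estimates would then be fed into the auxiliary test function
\[
Q := S + A\,\epsilon^{-m-1}\cdot \tr_{\om_\epsilon}\ti\om_\epsilon,
\]
with $A$ chosen large, but depending only on the uniform constants in Theorem \ref{Thm: existence of almost Ricci-flat metrics} and Corollary \ref{Cor: polynomial bound reference metrics 2}. At a maximum point $x_0 \in X$ of $Q$, combining the two inequalities gives
\[
0 \;\geq\; -\Delta_{\ti\om_\epsilon} Q(x_0) \;\geq\; \left(A c_0\,\epsilon^{-m-1} - C\epsilon^{-m}\right) S(x_0) - C\epsilon^{-m}S(x_0)^{1/2} - C\epsilon^{-m-1-n_0},
\]
which, after absorbing the $S^{1/2}$ term by Cauchy-Schwarz, yields $S(x_0)\leq C'\epsilon^{-m'}$. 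Since $\tr_{\om_\epsilon}\ti\om_\epsilon$ is already bounded on $X$, the same polynomial bound propagates to $S$ everywhere, giving \eqref{Equ: Polynomial bound of third-order derivatives}.

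The main obstacle is purely bookkeeping: executing Calabi's third-order computation in a form transparent enough to read off every coefficient of $|\Rm(\om_\epsilon)|_{\om_\epsilon}$ and $|\nabla^{g_\epsilon}\Rm(\om_\epsilon)|_{\om_\epsilon}$, and verifying that the implicit constants from commuting covariant derivatives (one must switch from $\nabla^{g_\epsilon}$ to $\nabla^{\ti g_\epsilon}$ several times, each producing a $\Psi$-term) can indeed be absorbed into the leading good term via Young's inequality. Since Corollary \ref{Cor: polynomial bound reference metrics 2} already gives polynomial control on every derivative of $\Rm(\om_\epsilon)$, and Lemma \ref{Lem: relation between two metrics} handles conversions between the two covariant derivatives, the accounting closes and only a polynomial blow-up rate in $\epsilon^{-1}$ is produced.
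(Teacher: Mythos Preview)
Your proposal is correct and follows essentially the same route as the paper: bound $S=|\Psi|^2_{\ti\om_\epsilon}$ via Calabi's third-order computation (using $\Ric(\ti\om_\epsilon)=0$ and the polynomial curvature bounds of Corollary~\ref{Cor: polynomial bound reference metrics 2}), couple it with a trace quantity whose Laplacian produces a good $-c_0 S$ term, and close by the maximum principle on a weighted combination. The only cosmetic differences are that the paper uses $\tr{\ti\om_\epsilon}{\om_\epsilon}$ rather than your $\tr{\om_\epsilon}{\ti\om_\epsilon}$, and weights the test function as $Q=\epsilon^{n_1}|\Psi|^2+(C+1)^2\tr{\ti\om_\epsilon}{\om_\epsilon}$ instead of putting the $\epsilon$-power on the trace term; since $\ti\om_\epsilon$ and $\om_\epsilon$ are uniformly equivalent, both choices yield the same $|\Psi|^2$ good term and the arguments are interchangeable.
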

\begin{proof}
We define the smooth tensor field
$$\Psi(\epsilon)^k_{ip}:=\Gamma(\ti{g}_{\epsilon})^{k}_{ip}-\Gamma(g_{\epsilon})^{k}_{ip}=({\ti{g}_{\epsilon}})^{k\bar{l}}\nabla^{g_{\epsilon}}_i(\ti{g}_{\epsilon})_{p\bar{l}}.$$
Then using the fact that $\ti{\om}_{\epsilon}$ is Ricci-flat, routine computation gives
\begin{equation}\label{Equ: Laplacian of Psi}
\begin{split}
\Delta_{\ti\om_{\epsilon}}\Psi(\epsilon)^k_{ip}&=({\ti{g}_{\epsilon}})^{a\bar{b}}\nabla^{\ti{g}_{\epsilon}}_a{\Rm(\om_{\epsilon})_{i\bar{b}p}}^k-\nabla^{\ti{g}_{\epsilon}}_i{\Ric(\ti\om_{\epsilon})_{p}}^k\\
&=({\ti{g}_{\epsilon}})^{a\bar{b}}\nabla^{\ti{g}_{\epsilon}}_a{\Rm(\om_{\epsilon})_{i\bar{b}p}}^k.\\ 
\end{split}
\end{equation}
To prove \eqref{Equ: Polynomial bound of third-order derivatives} it suffices to prove the following third-order estimate
\begin{equation}\label{Equ: Polynomial bound of Psi}
\|\Psi(\epsilon)\|_{C^0(X,\ti\om_{\epsilon})}\leq C_1\epsilon^{-n_1}.
\end{equation}
Note that since globally $\ti\om_{\epsilon}$ and $\om_{\epsilon}$ are uniformly equivalent, it doesn't matter which metric we choose to take the point-wise norm.

Since $\ti\om_{\epsilon}$ is Ricci-flat, we have $\Delta_{\ti\om_{\epsilon}}=\ov{\Delta}_{\ti\om_{\epsilon}}$ on any smooth tensor fields by Ricci identity. Hence we get
\[
\begin{split}
\left(-\Delta_{\ti\om_{\epsilon}}\right)\left|\Psi(\epsilon)\right|_{\ti\om_{\epsilon}}^2&=-\left|\nabla^{\ti{g}_{\epsilon}}\Psi(\epsilon)\right|_{\ti\om_{\epsilon}}^2-2\mathrm{Re}\left\{({\ti{g}_{\epsilon}})^{i\bar{j}}({\ti{g}_{\epsilon}})^{p\bar{q}}({\ti{g}_{\epsilon}})_{k\bar{l}}\left(\Delta_{\ti\om_{\epsilon}}\Psi(\epsilon)^k_{ip}\right)\ov{\Psi(\epsilon)^l_{jq}}\right\}\\
&= -\left|\nabla^{\ti{g}_{\epsilon}}\Psi(\epsilon)\right|_{\ti\om_{\epsilon}}^2+\nabla^{\ti{g}_{\epsilon}}\Rm(\om_{\epsilon})*\Psi(\epsilon)\\
&=-\left|\nabla^{\ti{g}_{\epsilon}}\Psi(\epsilon)\right|_{\ti\om_{\epsilon}}^2+\nabla^{g_{\epsilon}}\Rm(\om_{\epsilon})*\Psi(\epsilon)+\Rm(\om_{\epsilon})*\Psi(\epsilon)*\Psi(\epsilon),\\
\end{split}
\]
where the $*$ denotes tensor contraction by $\ti{g}_{\epsilon}$ or $g_{\epsilon}$. Since $\ti{g}_{\epsilon}$ and $g_{\epsilon}$ are uniformly equivalent, using Corollary \ref{Cor: polynomial bound reference metrics 2} and Cauchy-Schwartz inequality we obtain
\begin{equation}\label{Equ: evolution of the covariant derivatives of Ricci-flat metrics 1}
\begin{split}
\left(-\Delta_{\ti\om_{\epsilon}}\right)\left|\Psi(\epsilon)\right|_{\ti\om_{\epsilon}}^2
\leq&-\left|\nabla^{\ti{g}_{\epsilon}}\Psi(\epsilon)\right|_{\ti\om_{\epsilon}}^2+C\left|\nabla^{g_{\epsilon}}\Rm(\om_{\epsilon})\right|_{\om_{\epsilon}}\left|\Psi(\epsilon)\right|_{\ti\om_{\epsilon}}+C\left|\Rm(\om_{\epsilon})\right|_{\om_{\epsilon}}\left|\Psi(\epsilon)\right|_{\ti\om_{\epsilon}}^2\\
\leq&-\left|\nabla^{\ti{g}_{\epsilon}}\Psi(\epsilon)\right|_{\ti\om_{\epsilon}}^2+C\epsilon^{-n_1}\left|\Psi(\epsilon)\right|_{\ti\om_{\epsilon}}^2+C\epsilon^{-n_1}.\\
\end{split}
\end{equation}
Also, we compute the trace term under normal coordinates with respect to $\ti\om_{\epsilon}$, we have
\begin{equation}\label{Equ: evolution of trace}
\begin{split}
&\left(-\Delta_{\ti\om_{\epsilon}}\right)\left(\tr{\ti\om_{\epsilon}}{\om_{\epsilon}}\right)\\
=&-({\ti{g}_{\epsilon}})^{k\bar{l}}\cdot\de_k\de_{\bar{l}}({\ti{g}_{\epsilon}})^{i\bar{j}}\cdot (g_{\epsilon})_{i\bar{j}}-({\ti{g}_{\epsilon}})^{k\bar{l}}\cdot({\ti{g}_{\epsilon}})^{i\bar{j}}\cdot \de_k\de_{\bar{l}}(g_{\epsilon})_{i\bar{j}}\\
=&-{Ric(\ti\om_{\epsilon})}^{\bar{j}i}(g_{\epsilon})_{i\bar{j}}+({\ti{g}_{\epsilon}})^{k\bar{l}}({\ti{g}_{\epsilon}})^{i\bar{j}}R(\om_{\epsilon})_{i\bar{j}k\bar{l}}-({\ti{g}_{\epsilon}}))^{k\bar{l}}({\ti{g}_{\epsilon}})^{i\bar{j}}(g_{\epsilon})^{p\bar{q}}\de_k(g_{\epsilon})_{i\bar{q}}\de_{\bar{l}}(g_{\epsilon})_{p\bar{j}}\\
\leq&\quad ({\ti{g}_{\epsilon}})^{k\bar{l}}({\ti{g}_{\epsilon}})^{i\bar{j}}R(\om_{\epsilon})_{i\bar{j}k\bar{l}}-C^{-1}\left|\Psi(\epsilon)\right|_{\ti\om_{\epsilon}}^2\\
\leq&\quad C\epsilon^{-n_0}-C^{-1}\cdot\left|\Psi(\epsilon)\right|_{\ti\om_{\epsilon}}^2.\\
\end{split}
\end{equation}
Now fix the constant $C$ in \eqref{Equ: evolution of the covariant derivatives of Ricci-flat metrics 1} and \eqref{Equ: evolution of trace}, then we set
$$Q=\epsilon^{n_1}\left|\Psi(\epsilon)\right|_{\ti\om_{\epsilon}}^2+(C+1)^2\tr{\ti\om_{\epsilon}}{\om_{\epsilon}},$$
then we have
\[
\begin{split}
\left(-\Delta_{\ti\om_{\epsilon}}\right)Q
\leq&\left\{C\left|\Psi(\epsilon)\right|_{\ti\om_{\epsilon}}^2+C\right\}+(C+1)^2\left\{-C^{-1}\cdot\left|\Psi(\epsilon)\right|_{\ti\om_{\epsilon}}^2+C\epsilon^{-n_0}\right\}\\
\leq&-\left|\Psi(\epsilon)\right|_{\ti\om_{\epsilon}}^2+C\epsilon^{-n_0}.\\
\end{split}
\]
Same argument as before gives us the estimate
$$\left|\Psi(\epsilon)\right|_{\ti\om_{\epsilon}}^2\leq C_1\epsilon^{-n_1}$$
for some (perhaps larger) $n_1$. This establish \eqref{Equ: Polynomial bound of Psi} and finish the proof of Proposition \ref{Pro: Polynomial bound of third-order derivatives}.
\end{proof}

Next, we bound the curvature tensor of $\ti\om_\epsilon$.

\begin{proposition}\label{Pro: Polynomial growth of the curvature}
Assume as in the previous section. Then we have
\begin{equation}\label{Equ: Polynomial growth of the curvature}
\|\Rm(\ti\om_{\epsilon})\|_{C^0(X,\ti\om_{\epsilon})}\leq C_1\epsilon^{-n_1},
\end{equation}
where $C_1, n_1$ are positive constants which are independent of $\epsilon$.
\end{proposition}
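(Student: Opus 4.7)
The plan is to mirror the Calabi-type maximum-principle argument of Proposition \ref{Pro: Polynomial bound of third-order derivatives}, now applied to the Bochner quantity $|\Rm(\ti\om_\epsilon)|^2_{\ti\om_\epsilon}$ together with an auxiliary term that absorbs the cubic-curvature error coming from Ricci-flatness.

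Since $\Ric(\ti\om_\epsilon)=0$, the same second Bianchi / Ricci identity manipulation used to derive \eqref{Equ: Laplacian of higher-order derivatives of Rm 2} (with $\Ric\equiv 0$) yields $\Delta_{\ti\om_\epsilon}R(\ti\om_\epsilon)_{i\bar j k\bar l}=R(\ti\om_\epsilon)*R(\ti\om_\epsilon)$, and hence
\[
(-\Delta_{\ti\om_\epsilon})|\Rm(\ti\om_\epsilon)|^2_{\ti\om_\epsilon}\le-|\nabla^{\ti g_\epsilon}\Rm(\ti\om_\epsilon)|^2_{\ti\om_\epsilon}+C|\Rm(\ti\om_\epsilon)|^3_{\ti\om_\epsilon},
\]
the cubic term being the main source of difficulty. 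To handle it, I would exploit the algebraic identity obtained by differentiating $\Gamma(\ti g_\epsilon)-\Gamma(g_\epsilon)=\Psi(\epsilon)$ in the antiholomorphic direction (all mixed K\"ahler Christoffel symbols vanish, so $\partial_{\bar j}\Psi=\nabla^{g_\epsilon}_{\bar j}\Psi$):
\[
R(\ti g_\epsilon)^{k}{}_{ip\bar j}-R(g_\epsilon)^{k}{}_{ip\bar j}=-\nabla^{g_\epsilon}_{\bar j}\Psi(\epsilon)^{k}_{ip}.
\]
Combined with Corollary \ref{Cor: polynomial bound reference metrics 2} this yields both $|\Rm(\ti\om_\epsilon)|_{\ti\om_\epsilon}\le C\epsilon^{-n_0}+C|\nabla^{g_\epsilon}\Psi(\epsilon)|_{\ti\om_\epsilon}$ and the converse $|\nabla^{g_\epsilon}\Psi(\epsilon)|^2_{\ti\om_\epsilon}\ge\tfrac12|\Rm(\ti\om_\epsilon)|^2_{\ti\om_\epsilon}-C\epsilon^{-2n_0}$, so the problem is reduced to a pointwise bound on $|\nabla^{g_\epsilon}\Psi(\epsilon)|$.

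Then I would apply the maximum principle to a test function of the form
\[
Q:=\epsilon^{n}|\Rm(\ti\om_\epsilon)|^2_{\ti\om_\epsilon}+M_1|\Psi(\epsilon)|^2_{\ti\om_\epsilon}+M_2\tr{\ti\om_\epsilon}{\om_\epsilon}
\]
for suitably large constants $M_1,M_2$ and exponent $n$. The proof of Proposition \ref{Pro: Polynomial bound of third-order derivatives} already provides $(-\Delta_{\ti\om_\epsilon})|\Psi(\epsilon)|^2_{\ti\om_\epsilon}\le-|\nabla^{\ti g_\epsilon}\Psi(\epsilon)|^2_{\ti\om_\epsilon}+C\epsilon^{-n_1}$; by Lemma \ref{Lem: relation between two metrics} one has $|\nabla^{\ti g_\epsilon}\Psi|^2\ge C^{-1}|\nabla^{g_\epsilon}\Psi|^2-C|\Psi|^4$, which by the reduction above dominates a multiple of $|\Rm(\ti\om_\epsilon)|^2$ modulo polynomial-in-$\epsilon^{-1}$ error; and \eqref{Equ: evolution of trace} contributes $-C^{-1}|\Psi|^2+C\epsilon^{-n_0}$ from the trace. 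At a maximum $x_0$ of $Q$ the combined inequality becomes
\[
|\Rm(\ti\om_\epsilon)|^2_{\ti\om_\epsilon}(x_0)\le C\epsilon^{n}|\Rm(\ti\om_\epsilon)|^3_{\ti\om_\epsilon}(x_0)+C\epsilon^{-n},
\]
and a dichotomy on $\epsilon^{n}|\Rm(\ti\om_\epsilon)|(x_0)$ — either it is a priori bounded and the desired estimate follows, or else one factor of the cubic absorbs via $|\Rm(\ti\om_\epsilon)|\le C\epsilon^{-n_0}+|\nabla^{g_\epsilon}\Psi|$ into a quadratic-in-$|\Rm|$ term with polynomial-in-$\epsilon^{-1}$ coefficient — closes to $|\Rm(\ti\om_\epsilon)|^2(x_0)\le C_1\epsilon^{-n_1}$, whence the same bound everywhere by the structure of $Q$.

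The main obstacle will be the parameter juggling in the last step: the cubic $|\Rm(\ti\om_\epsilon)|^3$ is not \emph{a priori} dominated by the quadratic negatives supplied by the auxiliary, so one must carefully match the powers of $\epsilon$ in $(n,M_1,M_2)$ so that the dichotomy argument genuinely closes. The $\Psi*\Psi$-type discrepancy between $\nabla^{\ti g_\epsilon}\Psi$ and $\nabla^{g_\epsilon}\Psi$ arising from Lemma \ref{Lem: relation between two metrics} remains polynomial in $\epsilon^{-1}$ thanks to the $L^\infty$ bound on $\Psi$ already established in Proposition \ref{Pro: Polynomial bound of third-order derivatives}.
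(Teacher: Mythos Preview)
Your overall strategy---couple a Bochner inequality for $\Rm(\ti\om_\epsilon)$ with the good negative term coming from $|\Psi(\epsilon)|^2$---is exactly the paper's, but your execution has a real gap in the ``dichotomy'' step. With $|\Rm|^2$ as the leading quantity the Bochner error is \emph{cubic}, $C|\Rm|^3$, while the auxiliary $|\Psi|^2$ only supplies a \emph{quadratic} negative $-c|\Rm|^2$. At the maximum of $Q$ you are left with
\[
c\,|\Rm|^2(x_0)\ \le\ C\epsilon^{n}|\Rm|^3(x_0)+C\epsilon^{-n'},
\]
and your proposed absorption ``$|\Rm|\le C\epsilon^{-n_0}+|\nabla^{g_\epsilon}\Psi|$'' does not help: by the very identity you quote, $|\nabla^{g_\epsilon}\Psi|$ \emph{is} $|\Rm(\ti\om_\epsilon)|$ up to the polynomially bounded background curvature, so feeding it back into one factor of the cube just regenerates a cubic term. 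No choice of $(n,M_1,M_2)$ closes this without an a priori bound on $|\Rm|$, which is precisely what you are proving.

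The paper sidesteps this by applying the Bochner--Kato inequality to $|\Rm(\ti\om_\epsilon)|$ rather than its square: for Ricci-flat metrics one has $(-\Delta_{\ti\om_\epsilon})|\Rm(\ti\om_\epsilon)|_{\ti\om_\epsilon}\le C|\Rm(\ti\om_\epsilon)|_{\ti\om_\epsilon}^2$, so the error is only quadratic. Since Proposition~\ref{Pro: Polynomial bound of third-order derivatives} already gives $|\Psi|\le C\epsilon^{-n_1}$, inequality \eqref{Equ: evolution of the covariant derivatives of Ricci-flat metrics 1} simplifies to $(-\Delta_{\ti\om_\epsilon})|\Psi|^2\le -|\bar\nabla^{\ti g_\epsilon}\Psi|^2+C\epsilon^{-n_1}$, and because $\bar\nabla^{\ti g_\epsilon}\Psi=\Rm(\om_\epsilon)^\sharp-\Rm(\ti\om_\epsilon)^\sharp$ this is $\le -\tfrac12|\Rm(\ti\om_\epsilon)|^2+C\epsilon^{-n_1}$. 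Then the simple combination $Q=|\Rm(\ti\om_\epsilon)|+2(C+1)|\Psi|^2$ satisfies $(-\Delta_{\ti\om_\epsilon})Q\le -|\Rm(\ti\om_\epsilon)|^2+C\epsilon^{-n_1}$ and the maximum principle finishes; no trace term, no $\epsilon^n$ prefactor, and no dichotomy are needed. Replace $|\Rm|^2$ by $|\Rm|$ in your test function and your argument goes through cleanly.
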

\begin{proof}
Standard computations for Ricci-flat metrics give
\[
\begin{split}
&\left(-\Delta_{\ti\om_{\epsilon}}\right)\left|\Rm(\ti\om_{\epsilon})\right|_{\ti\om_{\epsilon}}
\leq C\left|\Rm(\ti\om_{\epsilon})\right|_{\ti\om_{\epsilon}}^2.\\
\end{split}
\]
Then we apply Corollary \ref{Cor: polynomial bound reference metrics 2} and Proposition \ref{Pro: Polynomial bound of third-order derivatives} to \eqref{Equ: evolution of the covariant derivatives of Ricci-flat metrics 1}:
\[
\begin{split}
\left(-\Delta_{\ti\om_{\epsilon}}\right)\left|\Psi(\epsilon)\right|_{\ti\om_{\epsilon}}^2&\leq-\left|\bar{\nabla}^{\ti{g}_{\epsilon}}\Psi(\epsilon)\right|_{\ti\om_{\epsilon}}^2+C\epsilon^{-n_1}\left|\Psi(\epsilon)\right|_{\ti\om_{\epsilon}}^2+C\epsilon^{-n_1}\\
&\leq-\left|\Rm(\ti\om_{\epsilon})^{\sharp}-\Rm(\om_{\epsilon})^{\sharp}\right|_{\ti\om_{\epsilon}}^2+C\epsilon^{-n_1}\\
&\leq-\frac{1}{2}\left|\Rm(\ti\om_{\epsilon})\right|_{\ti\om_{\epsilon}}^2+C\epsilon^{-n_1},\\
\end{split}
\]
where $\Rm^{\sharp}$ denotes the $(1,3)$-type curvature tensor. We can fix $C$ now and let
$$Q:=\left|\Rm(\ti\om_{\epsilon})\right|_{\ti\om_{\epsilon}}+2(C+1)\left|\Psi(\epsilon)\right|_{\ti\om_{\epsilon}}^2,$$
then we have
\[
\begin{split}
\left(-\Delta_{\ti\om_{\epsilon}}\right)Q
\leq&C\left|\Rm(\ti\om_{\epsilon})\right|_{\ti\om_{\epsilon}}^2+2(C+1)\left\{-\frac{1}{2}\left|\Rm(\ti\om_{\epsilon})\right|_{\ti\om_{\epsilon}}^2+C\epsilon^{-n_1}\right\}\\
\leq&-\left|\Rm(\ti\om_{\epsilon})\right|_{\ti\om_{\epsilon}}^2+C\epsilon^{-n_1}.\\
\end{split}
\]
Same argument as before gives us the estimate
$$\left|\Rm(\ti\om_{\epsilon})\right|_{\ti\om_{\epsilon}}^2\leq C_1\epsilon^{-n_1}$$
for some (perhaps larger) $n_1$. This finishes the proof of Proposition \ref{Pro: Polynomial growth of the curvature}.
\end{proof}

Finally, we can bound the higher-order derivatives of $\ti\om_\epsilon$ with respect to $\om_\epsilon$.

\begin{proposition}\label{Pro: Polynomial growth of higher-order derivatives}
Assume as in the previous section. Then we have for all $k\geq 1$
\begin{equation}\label{Equ: Polynomial growth of higher-order derivatives}
\|\ti\om_{\epsilon}\|_{C^k(X,\om_{\epsilon})}\leq C_k\epsilon^{-n_k},
\end{equation}
where $n_k, C_k$ are positive constants which are independent of $\epsilon$.
\end{proposition}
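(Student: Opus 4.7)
The plan is to prove Proposition \ref{Pro: Polynomial growth of higher-order derivatives} by induction on $k$, reducing at each step to a bound on the higher covariant derivatives of the difference-of-Christoffels tensor $\Psi(\epsilon)$ from Proposition \ref{Pro: Polynomial bound of third-order derivatives}. Since $\ti g_\epsilon$ and $g_\epsilon$ are uniformly equivalent by \eqref{Equ: C2 bound on u_epsilon}, the defining identity $\nabla^{g_\epsilon}_i(\ti g_\epsilon)_{p\bar l}=(\ti g_\epsilon)_{k\bar l}\,\Psi(\epsilon)^k_{ip}$ allows one to express $\nabla^{k,g_\epsilon}\ti g_\epsilon$ as a universal polynomial in $\ti g_\epsilon$, $\ti g_\epsilon^{-1}$, and $\nabla^{j,g_\epsilon}\Psi(\epsilon)$ for $0\leq j\leq k-1$. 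Consequently, Proposition \ref{Pro: Polynomial growth of higher-order derivatives} follows from the intermediate claim that, for every $m\geq 0$, there exist constants $C_m,n_m>0$ independent of $\epsilon$ with $\|\nabla^{m,g_\epsilon}\Psi(\epsilon)\|_{C^0(X,\om_\epsilon)}\leq C_m\epsilon^{-n_m}$. The case $m=0$ is exactly \eqref{Equ: Polynomial bound of Psi}.

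For the inductive step, assuming the claim for all orders $\leq m-1$, I plan to mimic the maximum-principle argument of Proposition \ref{Pro: Polynomial bound of third-order derivatives}, now applied to $|\nabla^{m,g_\epsilon}\Psi(\epsilon)|^2_{\ti\om_\epsilon}$. Differentiating identity \eqref{Equ: Laplacian of Psi} $m$ times, commuting $\nabla^{m,g_\epsilon}$ past $\Delta_{\ti\om_\epsilon}$ using \eqref{Equ: Laplacian of higher-order derivatives of alpha} (applied with $g=\ti g_\epsilon$), and converting all $\nabla^{\ti g_\epsilon}$-derivatives back to $\nabla^{g_\epsilon}$-derivatives via Lemma \ref{Lem: relation between two metrics}, a direct expansion should yield the schematic inequality
\begin{equation*}
(-\Delta_{\ti\om_\epsilon})|\nabla^{m,g_\epsilon}\Psi(\epsilon)|^2_{\ti\om_\epsilon} \leq -|\nabla^{m+1,g_\epsilon}\Psi(\epsilon)|^2_{\ti\om_\epsilon} + C\epsilon^{-n}\,|\nabla^{m,g_\epsilon}\Psi(\epsilon)|^2_{\ti\om_\epsilon} + C\epsilon^{-n},
\end{equation*}
in which every coefficient is polynomially bounded by Corollary \ref{Cor: polynomial bound reference metrics 2} (which controls $\nabla^{j,g_\epsilon}\Rm(\om_\epsilon)$ for $j\leq m+1$) together with the induction hypothesis (which controls $\nabla^{j,g_\epsilon}\Psi(\epsilon)$ for $j\leq m-1$). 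An analogous computation, run one order lower, gives
\begin{equation*}
(-\Delta_{\ti\om_\epsilon})|\nabla^{m-1,g_\epsilon}\Psi(\epsilon)|^2_{\ti\om_\epsilon} \leq -|\nabla^{m,g_\epsilon}\Psi(\epsilon)|^2_{\ti\om_\epsilon} + C\epsilon^{-n}.
\end{equation*}
Setting $Q:=\epsilon^{n}|\nabla^{m,g_\epsilon}\Psi(\epsilon)|^2_{\ti\om_\epsilon}+A\,|\nabla^{m-1,g_\epsilon}\Psi(\epsilon)|^2_{\ti\om_\epsilon}$ with $A$ sufficiently large, and evaluating $(-\Delta_{\ti\om_\epsilon})Q\geq 0$ at an interior maximum of $Q$, one deduces the desired pointwise bound on $|\nabla^{m,g_\epsilon}\Psi(\epsilon)|^2_{\ti\om_\epsilon}$ after possibly enlarging $n_m$, closing the induction.

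The main technical obstacle I anticipate is the careful bookkeeping of the curvature terms produced when commuting $\nabla^{m,g_\epsilon}$ past $\Delta_{\ti\om_\epsilon}$: the Ricci identity applied to the $\ti g_\epsilon$-connection brings in factors of $\Rm(\ti\om_\epsilon)$ and its higher covariant derivatives, for which only the $C^0$-bound of Proposition \ref{Pro: Polynomial growth of the curvature} is a priori available. To close the loop without circularity, I plan to rewrite every occurrence of $\nabla^{j,g_\epsilon}\Rm(\ti\om_\epsilon)$ schematically in the form $\Rm(\ti\om_\epsilon)=\Rm(\om_\epsilon)+\nabla^{g_\epsilon}\Psi(\epsilon)+\Psi(\epsilon)*\Psi(\epsilon)$, differentiated $j$ times, so that the highest-order contribution $\nabla^{m,g_\epsilon}\Psi(\epsilon)$ appears only linearly and can be absorbed into the $-|\nabla^{m+1,g_\epsilon}\Psi(\epsilon)|^2$ term via Cauchy-Schwarz, while all strictly lower-order terms are admissible by Corollary \ref{Cor: polynomial bound reference metrics 2} and the induction hypothesis. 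Since $\ti g_\epsilon$ and $g_\epsilon$ are uniformly equivalent, the resulting bound on $|\nabla^{m,g_\epsilon}\Psi(\epsilon)|^2_{\ti\om_\epsilon}$ is equivalent to the required $C^0(X,\om_\epsilon)$-bound, which in turn yields \eqref{Equ: Polynomial growth of higher-order derivatives} with a possibly enlarged exponent $n_k$.
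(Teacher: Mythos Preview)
Your overall strategy---induction on the order of $\nabla\Psi(\epsilon)$, Bochner-type inequality for $|\nabla^{m}\Psi(\epsilon)|^2$, and a maximum-principle argument applied to a weighted sum with $|\nabla^{m-1}\Psi(\epsilon)|^2$---is exactly the route the paper takes. Two points, however, deserve correction.

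\textbf{(i) The base step $m=1$ does not reduce to linearity in $\nabla^{m}\Psi$.} Your plan to rewrite \emph{every} occurrence of $\nabla^{j}\Rm(\ti\om_\epsilon)$ via $\Rm(\ti\om_\epsilon)=\Rm(\om_\epsilon)+\nabla\Psi+\Psi*\Psi$ fails at $m=1$. The commutator produces the term $\Rm(\ti\om_\epsilon)*\nabla^{1}\Psi*\nabla^{1}\Psi$ (the case $i_1=0$, $i_2=1$), and after your rewriting this contains $(\nabla\Psi)*(\nabla\Psi)*(\nabla\Psi)$, which is \emph{cubic} in the highest-order quantity, not linear; it cannot be absorbed by Cauchy--Schwarz into either $-|\nabla^{2}\Psi|^2$ or $C\epsilon^{-n}|\nabla^{1}\Psi|^2$. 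The paper flags this explicitly and handles it by \emph{not} rewriting the undifferentiated $\Rm(\ti\om_\epsilon)$: instead it invokes Proposition \ref{Pro: Polynomial growth of the curvature} directly to bound $|\Rm(\ti\om_\epsilon)|\leq C\epsilon^{-n}$, giving $\Rm(\ti\om_\epsilon)*\nabla\Psi*\nabla\Psi\leq C\epsilon^{-n}|\nabla\Psi|^2$. You mention Proposition \ref{Pro: Polynomial growth of the curvature} but your stated strategy bypasses it precisely where it is indispensable. For $m\geq 2$ the analogous term $(\nabla^{m}\Psi)*(\nabla\Psi)*(\nabla^{m}\Psi)$ is harmless because $|\nabla\Psi|$ is then controlled by the induction hypothesis.

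\textbf{(ii) Choice of connection.} The paper works with $\nabla^{m,\ti g_\epsilon}\Psi$ rather than $\nabla^{m,g_\epsilon}\Psi$. This matches $\Delta_{\ti\om_\epsilon}$, so the Bochner formula yields $-|\nabla^{m+1,\ti g_\epsilon}\Psi|^2$ as the good term without further commutation, and identity \eqref{Equ: Laplacian of higher-order derivatives of alpha} applies verbatim. With your choice one must additionally commute $\nabla^{g_\epsilon}$ past $\Delta_{\ti\om_\epsilon}$ and convert the good term $-|\nabla^{\ti g_\epsilon}\nabla^{m,g_\epsilon}\Psi|^2$ into $-|\nabla^{m+1,g_\epsilon}\Psi|^2$ via $\Psi$-corrections. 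This is doable, but adds bookkeeping; the paper's route (first establish bounds on $\nabla^{m,\ti g_\epsilon}\Psi$, then translate to $\|\ti\om_\epsilon\|_{C^k(X,\om_\epsilon)}$ via an intermediate Claim and Lemma \ref{Lem: relation between two metrics}) is cleaner.
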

\begin{proof}
We prove by induction that
\begin{equation}\label{Equ: Polynomial bound of higher-order derivatives of Psi}
\|\Psi(\epsilon)\|_{C^m(X,\ti\om_{\epsilon})}\leq C_m\epsilon^{-n_m},
\end{equation}
for $m\geq 0$. The $m=0$ case is just \eqref{Equ: Polynomial bound of Psi}. Now we assume this to be true for $0, \dots, m-1$ for $m\geq 1$. Now we prove this bound for $m$. 

We first claim that

\noindent{\bf Claim:} {\em Under the induction hypotheses, we have
\begin{equation}\label{Equ: Polynomial growth of higher-order derivatives 2}
\|\om_{\epsilon}\|_{C^l(X,\ti\om_{\epsilon})}\leq C_l\epsilon^{-n_l},~\|\ti\om_{\epsilon}\|_{C^l(X,\om_{\epsilon})}\leq C_l\epsilon^{-n_l}.
\end{equation}
for any $l=0,\dots, m$.}

\begin{proof}[Proof of the Claim]
First look at the first estimate. We still prove this by induction for $0\leq l\leq m$. For $l=0$, \eqref{Equ: Polynomial growth of higher-order derivatives 2} follows from the fact that $\ti{g}_{\epsilon}$ and $g_{\epsilon}$ are uniformly equivalent. Now assume \eqref{Equ: Polynomial growth of higher-order derivatives 2} holds for $0, \dots, l-1$ for $1\leq l\leq m$, we shall prove \eqref{Equ: Polynomial growth of higher-order derivatives 2} for $l$. We have
\[
\begin{split}
\nabla^{l-1,\ti{g}_{\epsilon}}\Psi(\epsilon)^k_{ip}
=&\nabla^{l-1,\ti{g}_{\epsilon}}\left(-(g_{\epsilon})^{k\bar{l}}\nabla^{\ti{g}_{\epsilon}}_i(g_{\epsilon})_{p\bar{l}}\right)\\
=&g_{\epsilon}*\nabla^{l,\ti{g}_{\epsilon}}g_{\epsilon}+\sum_{j\geq 1,i_1+\dots+i_j=l,0\leq i_1,\dots, i_j\leq l-1}\nabla^{i_1,\ti{g}_{\epsilon}}g_{\epsilon} *\dots*\nabla^{i_j,\ti{g}_{\epsilon}}g_{\epsilon}.\\
\end{split}
\]
where $*$ denotes tensor contraction by $g_{\epsilon}$. So we have
\begin{equation}\label{Equ: relation of covariant derivatives and Psi}
\nabla^{l,\ti{g}_{\epsilon}}g_{\epsilon}=g_{\epsilon}*\nabla^{l-1,\ti{g}_{\epsilon}}\Psi(\epsilon)+\sum_{j\geq 1,i_1+\dots+i_j=l,0\leq i_1,\dots, i_j\leq l-1}\nabla^{i_1,\ti{g}_{\epsilon}}g_{\epsilon} *\dots*\nabla^{i_j,\ti{g}_{\epsilon}}g_{\epsilon}.
\end{equation}
By induction hypotheses, we have
$$\left|\nabla^{l,\ti{g}_{\epsilon}}g_{\epsilon}\right|_{\ti\om_{\epsilon}}^2\leq C\left|\nabla^{l-1,\ti{g}_{\epsilon}}\Psi(\epsilon)\right|_{\ti\om_{\epsilon}}^2+C\epsilon^{-n_{l-1}} \leq C\epsilon^{-n_l}.$$
Hence we prove the first estimate. The second estimate follows from the first one and Lemma \ref{Lem: relation between two metrics}.
\end{proof}

Now, applying \eqref{Equ: Laplacian of higher-order derivatives of alpha} to $\alpha=\Psi(\epsilon)$ and use \eqref{Equ: Laplacian of Psi}, we have
\begin{equation}\label{Equ: Laplacian of higher-order derivatives of Psi}
\begin{split}
\Delta_{\ti\om_{\epsilon}}\left(\nabla^{k,\ti{g}_{\epsilon}}\Psi(\epsilon)\right)
=&\nabla^{k,\ti{g}_{\epsilon}}\left(\Delta_{\ti\om_{\epsilon}}\Psi(\epsilon)\right)+\sum_{i_1+i_2=k,i_1,i_2\geq 0}\nabla^{i_1,\ti{g}_{\epsilon}}\Rm(\ti\om_{\epsilon}) *\nabla^{i_2,\ti{g}_{\epsilon}}\Psi(\epsilon)\\
=&\nabla^{k,\ti{g}_{\epsilon}}\left(\nabla^{\ti{g}_{\epsilon}}\Rm(\om_{\epsilon})\right)+\sum_{i_1+i_2=k,i_1,i_2\geq 0}\nabla^{i_1,\ti{g}_{\epsilon}}\Rm(\ti\om_{\epsilon}) *\nabla^{i_2,\ti{g}_{\epsilon}}\Psi(\epsilon)\\ 
=&\nabla^{k+1,\ti{g}_{\epsilon}}\Rm(\om_{\epsilon})+\sum_{i_1+i_2=k,i_1,i_2\geq 0}\nabla^{i_1,\ti{g}_{\epsilon}}\Rm(\ti\om_{\epsilon}) *\nabla^{i_2,\ti{g}_{\epsilon}}\Psi(\epsilon).\\
\end{split}
\end{equation}
where $*$ denotes tensor contraction by $\ti{g}_{\epsilon}$. Since $\ti\om_{\epsilon}$ are Ricci-flat, we have $\Delta_{\ti\om_{\epsilon}}=\ov{\Delta}_{\ti\om_{\epsilon}}$ on any smooth tensor fields. Hence using \eqref{Equ: Laplacian of higher-order derivatives of Psi} we have
\begin{equation}\label{Equ: evolution of the covariant derivatives of Ricci-flat metrics 2}
\begin{split}
&\left(-\Delta_{\ti\om_{\epsilon}}\right)\left|\nabla^{k,\ti{g}_{\epsilon}}\Psi(\epsilon)\right|_{\ti\om_{\epsilon}}^2\\
=&-\left|\nabla^{k+1,\ti{g}_{\epsilon}}\Psi(\epsilon)\right|_{\ti\om_{\epsilon}}^2+\left(\Delta_{\ti\om_{\epsilon}}\left(\nabla^{k,\ti{g}_{\epsilon}}\Psi(\epsilon)\right)\right)*\nabla^{k,\ti{g}_{\epsilon}}\Psi(\epsilon)\\
=&-\left|\nabla^{k+1,\ti{g}_{\epsilon}}\Psi(\epsilon)\right|_{\ti\om_{\epsilon}}^2+\nabla^{k+1,\ti{g}_{\epsilon}}\Rm(\om_{\epsilon})*\nabla^{k,\ti{g}_{\epsilon}}\Psi(\epsilon)+\sum_{i_1+i_2=k,i_1,i_2\geq 0}\nabla^{i_1,\ti{g}_{\epsilon}}\Rm(\ti\om_{\epsilon}) *\nabla^{i_2,\ti{g}_{\epsilon}}\Psi(\epsilon)*\nabla^{k,\ti{g}_{\epsilon}}\Psi(\epsilon)\\
=&:-\left|\nabla^{k+1,\ti{g}_{\epsilon}}\Psi(\epsilon)\right|_{\ti\om_{\epsilon}}^2+\uppercase\expandafter{\romannumeral1}+\uppercase\expandafter{\romannumeral2}.\\
\end{split}
\end{equation}

We now assume $k\leq m$ and bound each term.

\noindent {\bf Estimate of the term \uppercase\expandafter{\romannumeral1}.}
We use Lemma \ref{Lem: relation between two metrics} to compute
\[
\begin{split}
\uppercase\expandafter{\romannumeral1}=&\nabla^{k+1,\ti{g}_{\epsilon}}\Rm(\om_{\epsilon})*\nabla^{k,\ti{g}_{\epsilon}}\Psi(\epsilon)\\
=&\sum_{j\geq 1, i_1+\dots +i_j=k+1,i_1,\dots ,i_j\geq 0}\nabla^{i_1,g_{\epsilon}}{\beta}*\dots*\nabla^{i_j,g_{\epsilon}}{\beta}*\nabla^{k,\ti{g}_{\epsilon}}\Psi(\epsilon).\\
\end{split}
\]
where $*$ denotes tensor contraction by $\ti{g}_{\epsilon}$, and $\beta$ denotes $\ti{g}_{\epsilon}$ or $\Rm(\om_{\epsilon})$. If one term in this sum involves some components like $\nabla^{k+1,g_{\epsilon}}\ti{g}_{\epsilon}$, then using  \eqref{Equ: relation of covariant derivatives and Psi} (here we change the roles of $\ti{g}_{\epsilon}$ and $g_{\epsilon}$), this term can be written as 
$$\ti{g}_{\epsilon}*\nabla^{k,g_{\epsilon}}\Psi(\epsilon)+\sum_{j\geq 1,i_1+\dots+i_j=k+1,0\leq i_1,\dots, i_j\leq k}\nabla^{i_1,g_{\epsilon}}\ti{g}_{\epsilon} *\dots*\nabla^{i_j,g_{\epsilon}}\ti{g}_{\epsilon},$$
where $*$ denotes tensor contraction by $\ti{g}_{\epsilon}$. Hence using the claim \eqref{Equ: Polynomial growth of higher-order derivatives 2}, such terms are bounded above by
\[
\begin{split}
&C\left|\nabla^{k+1,\ti{g}_{\epsilon}}g_{\epsilon}\right|_{\ti\om_{\epsilon}}\cdot C_0{\epsilon}^{-n_0}\cdot \left|\nabla^{k,\ti{g}_{\epsilon}}\Psi(\epsilon)\right|_{\ti\om_{\epsilon}}\\
\leq &C\left(C\left|\nabla^{k,\ti{g}_{\epsilon}}\Psi(\epsilon)\right|_{\ti\om_{\epsilon}}+C_k{\epsilon}^{-n_k}\right)\cdot C_0{\epsilon}^{-n_0}\cdot \left|\nabla^{k,\ti{g}_{\epsilon}}\Psi(\epsilon)\right|_{\ti\om_{\epsilon}}\\
\leq &C_k{\epsilon}^{-n_k}\left|\nabla^{k,\ti{g}_{\epsilon}}\Psi(\epsilon)\right|_{\ti\om_{\epsilon}}^2+C_k{\epsilon}^{-n_k}.\\
\end{split}
\]
All other terms only contain $\nabla^{i,g_{\epsilon}}\ti{g}_{\epsilon}$ with $0\leq i\leq k$ and $\nabla^{i,g_{\epsilon}}\Rm(\om_{\epsilon})$ with $0\leq i\leq k+1$, using the claim \eqref{Equ: Polynomial growth of higher-order derivatives 2} together with Corollary \ref{Cor: polynomial bound reference metrics 2}, such terms are bounded above by
$$C_k{\epsilon}^{-n_k}\left|\nabla^{k,\ti{g}_{\epsilon}}\Psi(\epsilon)\right|_{\ti\om_{\epsilon}}+C_k{\epsilon}^{-n_k}.$$
Hence we get the following estimate
\begin{equation}\label{Equ: bound on 1}
\uppercase\expandafter{\romannumeral1}\leq C_k{\epsilon}^{-n_k}\left|\nabla^{k,\ti{g}_{\epsilon}}\Psi(\epsilon)\right|_{\ti\om_{\epsilon}}^2+C_k{\epsilon}^{-n_k}.
\end{equation}

\noindent {\bf Estimate of the term \uppercase\expandafter{\romannumeral2}.}
We rewrite $\uppercase\expandafter{\romannumeral2}$ as 
\[
\begin{split}
\uppercase\expandafter{\romannumeral2}=&\nabla^{k,\ti{g}_{\epsilon}}\Rm(\ti\om_{\epsilon})*\nabla^{k,\ti{g}_{\epsilon}}\Psi(\epsilon)+\nabla^{k-1,\ti{g}_{\epsilon}}\Rm(\ti\om_{\epsilon}) *\nabla^{1,\ti{g}_{\epsilon}}\Psi(\epsilon)*\nabla^{k,\ti{g}_{\epsilon}}\Psi(\epsilon)\\
&+\sum_{i_1+i_2=k,i_1,i_2\geq 0,i_1\leq k-2}\nabla^{i_1,\ti{g}_{\epsilon}}\Rm(\ti\om_{\epsilon}) *\nabla^{i_2,\ti{g}_{\epsilon}}\Psi(\epsilon)*\nabla^{k,\ti{g}_{\epsilon}}\Psi(\epsilon)\\
=&:\uppercase\expandafter{\romannumeral2}_1+\uppercase\expandafter{\romannumeral2}_2+\uppercase\expandafter{\romannumeral2}_3.\\
\end{split}
\]
We have
$$\nabla^{\ti{g}_{\epsilon}}_{\bar{b}}\Psi(\epsilon)^k_{ip}=\de_{\bar{b}}\Psi(\epsilon)^k_{ip}=\de_{\bar{b}}\Gamma(\ti{g}_{\epsilon})^{k}_{ip}-\de_{\bar{b}}\Gamma(g_{\epsilon})^{k}_{ip}=-{\Rm(\ti\om_{\epsilon})_{i\bar{b}p}}^k+{\Rm(\om_{\epsilon})_{i\bar{b}p}}^k.$$
and hence
$$\Rm(\ti\om_{\epsilon})=g_{\epsilon}*\Rm(\om_{\epsilon})+\nabla^{\ti{g}_{\epsilon}}\Psi(\epsilon).$$
Hence we have for $0\leq l\leq k$
\begin{equation}\label{Equ: relation of Rm and Psi}
\nabla^{l,\ti{g}_{\epsilon}}\Rm(\ti\om_{\epsilon})=\nabla^{l+1,\ti{g}_{\epsilon}}\Psi(\epsilon)+\sum_{i_1+i_2=l,i_1,i_2\geq 0}\nabla^{i_1,\ti{g}_{\epsilon}}\Rm(\om_{\epsilon}) *\nabla^{i_2,\ti{g}_{\epsilon}}g_{\epsilon}.
\end{equation}
Using Lemma \ref{Lem: relation between two metrics}, we have
$$\nabla^{i_1,\ti{g}_{\epsilon}}\Rm(\om_{\epsilon})=\sum_{j\geq 1, s_1+\dots +s_j=i_1, s_1,\dots ,s_j\geq 0}\nabla^{s_1,g_{\epsilon}}{\beta}*\dots*\nabla^{s_j,g_{\epsilon}}{\beta},$$
where $*$ denotes tensor contraction by $\ti{g}_{\epsilon}$, and $\beta$ denotes $\ti{g}_{\epsilon}$ or $\Rm(\om_{\epsilon})$. Using Corollary \ref{Cor: polynomial bound reference metrics 2} and the claim, we have for $i_1\leq k$
$$\left|\nabla^{i_1,\ti{g}_{\epsilon}}\Rm(\om_{\epsilon})\right|_{\ti\om_{\epsilon}}\leq C_k\epsilon^{-n_k}.$$
Then we obtain from \eqref{Equ: relation of Rm and Psi} for $0\leq l\leq k$
\begin{equation}\label{Equ: relation of Rm and Psi 2}
\left|\nabla^{l,\ti{g}_{\epsilon}}\Rm(\ti\om_{\epsilon})\right|_{\ti\om_{\epsilon}}\leq \left|\nabla^{l+1,\ti{g}_{\epsilon}}\Psi(\epsilon)\right|_{\ti\om_{\epsilon}}+C_k\epsilon^{-n_k}.
\end{equation}
From this, we get
\[
\begin{split}
\uppercase\expandafter{\romannumeral2}_1
\leq &\left(\left|\nabla^{k+1,\ti{g}_{\epsilon}}\Psi(\epsilon)\right|_{\ti\om_{\epsilon}}+C_k\epsilon^{-n_k}\right)\left|\nabla^{k,\ti{g}_{\epsilon}}\Psi(\epsilon)\right|_{\ti\om_{\epsilon}}\\
\leq &\frac{1}{100}\left|\nabla^{k+1,\ti{g}_{\epsilon}}\Psi(\epsilon)\right|_{\ti\om_{\epsilon}}^2+C_k\left|\nabla^{k,\ti{g}_{\epsilon}}\Psi(\epsilon)\right|_{\ti\om_{\epsilon}}^2+C_k{\epsilon}^{-n_k}.\\
\end{split}
\]
Then apply \eqref{Equ: relation of Rm and Psi 2} to $\uppercase\expandafter{\romannumeral2}_3$ with $l\leq k-2$, by induction  hypothesis we have
$$\left|\nabla^{l,\ti{g}_{\epsilon}}\Rm(\ti\om_{\epsilon})\right|_{\ti\om_{\epsilon}}\leq \left|\nabla^{l+1,\ti{g}_{\epsilon}}\Psi(\epsilon)\right|_{\ti\om_{\epsilon}}+C_k\epsilon^{-n_k}\leq C_k\epsilon^{-n_k},$$
hence
\[
\begin{split}
\uppercase\expandafter{\romannumeral2}_3
\leq &C_k\epsilon^{-n_k}\left(\left|\nabla^{k,\ti{g}_{\epsilon}}\Psi(\epsilon)\right|_{\ti\om_{\epsilon}}^2+\left|\nabla^{k,\ti{g}_{\epsilon}}\Psi(\epsilon)\right|_{\ti\om_{\epsilon}}\right)\\
\leq &C_k{\epsilon}^{-n_k}\left|\nabla^{k,\ti{g}_{\epsilon}}\Psi(\epsilon)\right|_{\ti\om_{\epsilon}}^2+C_k{\epsilon}^{-n_k}.\\
\end{split}
\]
For the term $\uppercase\expandafter{\romannumeral2}_2$, we need to be careful about whether $k=1$ or not. If $k>1$,  then by the induction hypothesis, we have
$$\left|\nabla^{1,\ti{g}_{\epsilon}}\Psi(\epsilon)\right|_{\ti\om_{\epsilon}}\leq C_1\epsilon^{-n_1},$$
thus we can apply \eqref{Equ: relation of Rm and Psi 2} to $\uppercase\expandafter{\romannumeral2}_2$ with $l=k-1$ to obtain
\[
\begin{split}
\uppercase\expandafter{\romannumeral2}_2
\leq &\left(\left|\nabla^{k,\ti{g}_{\epsilon}}\Psi(\epsilon)\right|_{\ti\om_{\epsilon}}+C_k\epsilon^{-n_k}\right)\cdot C_1{\epsilon}^{-n_1}\cdot\left|\nabla^{k,\ti{g}_{\epsilon}}\Psi(\epsilon)\right|_{\ti\om_{\epsilon}}\\
\leq &C_k{\epsilon}^{-n_k}\left|\nabla^{k,\ti{g}_{\epsilon}}\Psi(\epsilon)\right|_{\ti\om_{\epsilon}}^2+C_k{\epsilon}^{-n_k}.\\
\end{split}
\]
When $k=1$, by Proposition \ref{Pro: Polynomial growth of the curvature} we have
\[
\begin{split}
\uppercase\expandafter{\romannumeral2}_2
=&\Rm(\ti\om_{\epsilon}) *\nabla^{1,\ti{g}_{\epsilon}}\Psi(\epsilon)*\nabla^{1,\ti{g}_{\epsilon}}\Psi(\epsilon)\\
\leq &\left|\Rm(\ti\om_{\epsilon})\right|_{\ti\om_{\epsilon}}\cdot\left|\nabla^{1,\ti{g}_{\epsilon}}\Psi(\epsilon)\right|_{\ti\om_{\epsilon}}^2\\
\leq &C_1\epsilon^{-n_1}\cdot\left|\nabla^{1,\ti{g}_{\epsilon}}\Psi(\epsilon)\right|_{\ti\om_{\epsilon}}^2.\\
\end{split}
\]
Note that we cannot use  \eqref{Equ: relation of Rm and Psi 2} to bound this term by $C_1{\epsilon}^{-n_1}\left|\nabla^{1,\ti{g}_{\epsilon}}\Psi(\epsilon)\right|_{\ti\om_{\epsilon}}^3+C_1{\epsilon}^{-n_1}$, since we cannot bound the term $\left|\nabla^{1,\ti{g}_{\epsilon}}\Psi(\epsilon)\right|_{\ti\om_{\epsilon}}^3$. This is why we need to bound $\left|\Rm(\ti\om_{\epsilon})\right|_{\ti\om_{\epsilon}}$ first.

Now we conclude that for $k\leq m$
\begin{equation}\label{Equ: evolution of higher-order derivative 1}
\left(-\Delta_{\ti\om_{\epsilon}}\right)\left|\nabla^{k,\ti{g}_{\epsilon}}\Psi(\epsilon)\right|_{\ti\om_{\epsilon}}^2\leq -\frac{1}{2}\left|\nabla^{k+1,\ti{g}_{\epsilon}}\Psi(\epsilon)\right|_{\ti\om_{\epsilon}}^2+ C_k{\epsilon}^{-n_k}\left|\nabla^{k,\ti{g}_{\epsilon}}\Psi(\epsilon)\right|_{\ti\om_{\epsilon}}^2+C_k{\epsilon}^{-n_k}.
\end{equation}
In particular, let $k=m$, we get
\begin{equation}\label{Equ: evolution of higher-order derivative 2}
\left(-\Delta_{\ti\om_{\epsilon}}\right)\left|\nabla^{m,\ti{g}_{\epsilon}}\Psi(\epsilon)\right|_{\ti\om_{\epsilon}}^2\leq C_m{\epsilon}^{-n_m}\left|\nabla^{m,\ti{g}_{\epsilon}}\Psi(\epsilon)\right|_{\ti\om_{\epsilon}}^2+C_m{\epsilon}^{-n_m}.
\end{equation}
Again, let $k=m-1$ and use the induction hypothesis, we have
\begin{equation}\label{Equ: evolution of higher-order derivative 3}
\left(-\Delta_{\ti\om_{\epsilon}}\right)\left|\nabla^{m-1,\ti{g}_{\epsilon}}\Psi(\epsilon)\right|_{\ti\om_{\epsilon}}^2\leq -\frac{1}{2}\left|\nabla^{m,\ti{g}_{\epsilon}}\Psi(\epsilon)\right|_{\ti\om_{\epsilon}}^2+ C_m{\epsilon}^{-n_m}.
\end{equation}
Now we can fix $C_m$ in \eqref{Equ: evolution of higher-order derivative 2} and \eqref{Equ: evolution of higher-order derivative 3}, then we set
$$Q:={\epsilon}^{n_m}\left|\nabla^{m,\ti{g}_{\epsilon}}\Psi(\epsilon)\right|_{\ti\om_{\epsilon}}^2+2(C_m+1)\left|\nabla^{m-1,\ti{g}_{\epsilon}}\Psi(\epsilon)\right|_{\ti\om_{\epsilon}}^2.$$
We have
\[
\begin{split}
\left(-\Delta_{\ti\om_{\epsilon}}\right)Q
\leq&\left\{C_m\left|\nabla^{m,\ti{g}_{\epsilon}}\Psi(\epsilon)\right|_{\ti\om_{\epsilon}}^2+C_m\right\}+2(C_m+1)\left\{-\frac{1}{2}\left|\nabla^{m,\ti{g}_{\epsilon}}\Psi(\epsilon)\right|_{\ti\om_{\epsilon}}^2+ C_m{\epsilon}^{-n_m}\right\}\\
\leq&-\left|\nabla^{m,\ti{g}_{\epsilon}}\Psi(\epsilon)\right|_{\ti\om_{\epsilon}}^2+C\epsilon^{-n_m}.\\
\end{split}
\]
By maximum principle as before, we get 
$$\left|\nabla^{m,\ti{g}_{\epsilon}}\Psi(\epsilon)\right|_{\ti\om_{\epsilon}}^2\leq C_m\epsilon^{-n_m},$$
for some (perhaps larger) $n_m$. This finish the proof of \eqref{Equ: Polynomial bound of higher-order derivatives of Psi}. Finally, use  the Claim again to finish the proof of Proposition \ref{Pro: Polynomial growth of higher-order derivatives}.
\end{proof}

\subsection{Application of BIC principle}\

We need the BIC principle of \cite{JS}: 

\begin{lemma}[The ``Boundedness Implies Convergence'' Principle\cite{JS}]\label{Thm: BIC principle}
Let $X$ be an n-dimension Riemannian manifold (not necessarily to be compact or complete) and $U$ be an open subset. Let $\ti{g}(t)$ be a family of Riemannian metrics on $X$, $t\in \R$ and let $\eta(t)$ be a family of smooth functions or general tensor fields on $X$, satisfying the following conditions:

There exists positive constants $A_1, A_2, \dots$, and a positive function $h_0(t)$ which tends to zero as $t\to \infty$ such that
\begin{enumerate}
\item [(A)] ~$\|\eta(t)\|_{C^0(U,\ti{g}(t))}\leq h_0(t).$
\item [(B)] ~$\|\eta(t)\|_{C^k(U,\ti{g}(t))}\leq A_k, ~for~ $k=1,2,\dots$~. $
\item [(C)] ~For any compact subset $K\subset\subset U$, there exists  smooth cut-off function $\rho$ with compact support $\hat{K}\subset\subset U$ such that $0\leq\rho\leq 1$, and  $\rho\equiv 1$ in a neighborhood of  $K$, satisfying
\begin{equation}\label{Equ: cutoff bound}
\left|\nabla{\rho}\right|_{\ti{g}(t)}^2+|\Delta_{\ti{g}(t)}\rho|\leq B_{K}.
\end{equation}
on $\hat{K}\times [0,\infty)$, for some constant $B_{K}$ independent of $t$ (but may depend on the geometry of $K$).
\end{enumerate}

Then we have: For any compact subset $K\subset\subset U$ the estimates
\begin{equation}\label{Equ: convergence in general}
\|\eta(t)\|_{C^k(K,\ti{g}(t))}\leq h_{K,k}(t).
\end{equation}
where $h_{K,k}(t)$ are positive functions which tend to zero as $t\to \infty$, depenging on the constants $A_0$, $A_1$, $\dots$, $A_{k+2}$, $B_K$ and the function $h(t)$.

If we only have Condition $(B)$ for $1\leq k\leq N+2$, then we still have the estimate \eqref{Equ: convergence in general} for $1\leq k\leq N$.
\end{lemma}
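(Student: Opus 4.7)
The plan is to prove the lemma by induction on $k$, with the base case $k=0$ being precisely hypothesis (A). For the inductive step, assume $\|\eta(t)\|_{C^{k-1}(K',\ti{g}(t))}\to 0$ as $t\to\infty$ on every compact $K'\subset\subset U$; I will upgrade to the $C^k$ estimate on any compact $K\subset\subset U$. The vehicle is a Kolmogorov/Gagliardo--Nirenberg-type interpolation inequality of the schematic form
\[
\|\nabla^{k,\ti{g}(t)}\eta\|_{C^0(K,\ti{g}(t))}\;\leq\; C(A_{k+2},B_K)\cdot\|\nabla^{k-1,\ti{g}(t)}\eta\|_{C^0(K',\ti{g}(t))}^{\theta}
\]
for some $\theta>0$ and auxiliary $K\subset\subset K'\subset\subset U$. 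Combined with the inductive hypothesis and the uniform bounds (B), this yields the desired smallness, with the constant depending on $A_0,\dots,A_{k+2}$ exactly as stated.

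The interpolation itself I would derive by a Bochner-with-cutoff argument. The pointwise identity
\[
|\nabla\alpha|^2_{\ti{g}(t)}=\tfrac{1}{2}\Delta_{\ti{g}(t)}|\alpha|^2_{\ti{g}(t)}-\mathrm{Re}\langle\Delta_{\ti{g}(t)}\alpha,\alpha\rangle_{\ti{g}(t)},
\]
applied to $\alpha=\nabla^{k-1,\ti{g}(t)}\eta$, multiplied by $\rho^2$, and integrated by parts (transferring derivatives off $\alpha$ onto $\rho$) yields a weighted $L^2$ estimate
\[
\int_X\rho^2|\nabla^{k,\ti{g}(t)}\eta|^2\,dV_{\ti{g}(t)}\;\leq\; C(B_K,A_{k+1})\cdot\|\nabla^{k-1,\ti{g}(t)}\eta\|_{C^0(\hat K,\ti{g}(t))},
\]
where one uses $|\Delta(\rho^2)|\leq 2|\nabla\rho|^2+2\rho|\Delta\rho|\leq 4B_K$ from (C) and the fact that $|\Delta\alpha|$ is dominated pointwise by $|\nabla^{k+1,\ti{g}(t)}\eta|\leq A_{k+1}$ (since the Laplacian of a tensor is just a trace of its second covariant derivative and carries no curvature terms). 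To upgrade this integral smallness to a $C^0$ bound on the smaller set $K$, I would iterate the same procedure on $\nabla^{k,\ti{g}(t)}\eta$ to obtain $L^2$-control on one additional covariant derivative, and then invoke a localised Sobolev-type embedding whose constants remain uniform in $t$ because of the cutoff bounds. This bootstrap is precisely where $A_{k+2}$ enters.

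The main obstacle is running this scheme uniformly in $t$ on a manifold carrying no a priori curvature or injectivity-radius control on the family $\{\ti{g}(t)\}$. Hypothesis (C) is exactly the substitute: the bound $|\nabla\rho|^2_{\ti{g}(t)}\leq B_K$ forces the uniform separation $d_{\ti{g}(t)}(K,\,X\setminus\hat K)\gtrsim 1/\sqrt{B_K}$ between the two cutoff levels, guaranteeing that neither the integration-by-parts boundary contributions nor the Sobolev constants degenerate as $t$ varies, while $|\Delta\rho|\leq B_K$ supplies the Laplacian term in the Bochner step. Carrying out the bootstrap while keeping track of the precise powers of $A_{k+1}$, $A_{k+2}$ and of $h_0(t)$ in the resulting decay rate $h_{K,k}(t)$, and verifying that the degenerate-geometry setting does not spoil uniformity, is the technical heart of the argument.
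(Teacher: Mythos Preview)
This lemma is not proved in the present paper; it is imported verbatim from \cite{JS}, so there is no in-paper argument to compare against.  I can, however, assess your outline on its own terms, and there is a genuine gap.

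Your $L^2$ Bochner step already conceals a missing ingredient: after the integration by parts the right-hand side carries a factor $\mathrm{Vol}_{\ti g(t)}(\hat K)$, and nothing in (A)--(C) bounds this uniformly in $t$.  The more serious problem is the next step, where you ``invoke a localised Sobolev-type embedding whose constants remain uniform in $t$ because of the cutoff bounds.''  A uniform Sobolev (or mean-value) inequality on $(\hat K,\ti g(t))$ requires some uniform geometric input --- a Ricci lower bound, a lower bound on volumes of small balls, or an injectivity-radius bound --- and hypotheses (A)--(C) provide none of these.  The only geometric consequence of (C) is the distance bound $d_{\ti g(t)}(K,X\setminus\hat K)\ge B_K^{-1/2}$; this is far too weak to control a Sobolev constant.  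You flag this as ``the technical heart,'' but it is not a detail to be checked: with only (A)--(C) the Sobolev constants can genuinely degenerate, so the $L^2\!\to\!C^0$ passage fails.

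The argument that does go through is purely pointwise and avoids integration altogether.  For a tensor $\alpha$ with $|\alpha|_{\ti g(t)}\le\varepsilon$ on $\hat K$ and $|\nabla^{2,\ti g(t)}\alpha|_{\ti g(t)}\le A$, fix $p\in K$ and a unit vector $v$; parallel transport along the $\ti g(t)$-geodesic $\gamma$ with $\gamma(0)=p$, $\gamma'(0)=v$ yields the Taylor estimate
\[
s\,\bigl|\nabla_v\alpha(p)\bigr|\;\le\;|\alpha(\gamma(s))|+|\alpha(p)|+\tfrac12 s^2A\;\le\;2\varepsilon+\tfrac12 s^2A
\]
for every $s$ with $\gamma([0,s])\subset\hat K$.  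Here (C) enters only through the observation that $|\nabla\rho|_{\ti g(t)}\le B_K^{1/2}$ forces any unit-speed curve from $K$ to $X\setminus\hat K$ to have length at least $B_K^{-1/2}$, so the geodesic segment is long enough; optimising in $s$ gives $|\nabla\alpha|_{\ti g(t)}\le C(n,B_K)\sqrt{A\varepsilon}$ once $\varepsilon$ is small.  Applying this with $\alpha=\nabla^{k-1,\ti g(t)}\eta$, $\varepsilon=h_{K',k-1}(t)$, $A=A_{k+1}$ closes the induction with $h_{K,k}\sim C\,h_{K',k-1}^{1/2}$, which also explains the paper's remark that exponential decay of $h_0$ propagates to all $h_{K,k}$.
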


\begin{remark}
In this paper, we work globally, hence we don't need to use cut-off functions at all. Or equivalently, we choose $\rho\equiv 1$ in condition $(C)$ in Lemma \ref{Thm: BIC principle}. Also, the proof of Lemma \ref{Thm: BIC principle} shows that if $h_0(t)$ is of exponential decay, so are all the $h_k(t)$'s.
\end{remark}

Now we can prove Theorem \ref{Thm: convergence of higher order estimate for CY}.
\begin{proof}[Proof of Theorem \ref{Thm: convergence of higher order estimate for CY}]
From Proposition \ref{Pro: Polynomial growth of higher-order derivatives}, we have
$$\left|\nabla^{k,g_{\epsilon}}\ti{g}_{\epsilon}\right|_{\ti\om_{\epsilon}}^2\leq C_k\epsilon^{-n_k},$$
for all $k\geq 0$, where $n_k, C_k$ are positive constants independent of $\epsilon$. Without loss of generality, we can assume that $n_1\leq n_2\leq \dots$ and $C_0\leq C_1\leq C_2\leq \dots$. Now, given any positive integer $N$, we have 
$$\left|\nabla^{k,g_{\epsilon}}\ti{g}_{\epsilon}\right|_{\ti\om_{\epsilon}}^2\leq C_{N+2}\epsilon^{-n_{N+2}},$$
for all $1\leq k\leq N+2$. Now we set
\[
\left\{
       \begin{aligned}
       & \ti\om_{\epsilon}^{\bullet}=\epsilon^{-n_{N+2}}\ti\om_{\epsilon},\\
       & \om_{\epsilon}^{\bullet}=\epsilon^{-n_{N+2}}\om_{\epsilon},\\
       \end{aligned}
\right.
\]
then for all $1\leq k\leq N+2$, we have
\[
\begin{split}
\left|\nabla^{k,g_{\epsilon}^{\bullet}}\ti{g}_{\epsilon}^{\bullet}\right|_{\ti\om_{\epsilon}^{\bullet}}^2
=&\epsilon^{(k+2)n_{N+2}}\cdot\epsilon^{-2n_{N+2}}\cdot\left|\nabla^{k,g_{\epsilon}}\ti{g}_{\epsilon}\right|_{\ti\om_{\epsilon}}^2\\
=&\epsilon^{kn_{N+2}}\cdot\left|\nabla^{k,g_{\epsilon}}\ti{g}_{\epsilon}\right|_{\ti\om_{\epsilon}}^2\\
\leq &\epsilon^{n_{N+2}}\cdot\left|\nabla^{k,g_{\epsilon}}\ti{g}_{\epsilon}\right|_{\ti\om_{\epsilon}}^2\\
\leq &\epsilon^{n_{N+2}}\cdot C_k\epsilon^{-n_{k}}\\
\leq & C_{N+2}.\\
\end{split}
\]
Also by Lemma \ref{Lem: c0 convergence of the metrics}, we have
$$\|\ti{\om}_{\epsilon}^{\bullet}-\om_{\epsilon}^{\bullet}\|_{C^0(X,\om_{\epsilon}^{\bullet})}=\|\ti{\om}_{\epsilon}-\om_{\epsilon}\|_{C^0(X,\om_{\epsilon})}\leq C_0e^{-\frac{\delta_0}{\epsilon}}.$$
Hence, we can apply the BIC principle Lemma \ref{Thm: BIC principle}, to obtain
$$\|\ti{\om}_{\epsilon}^{\bullet}-\om_{\epsilon}^{\bullet}\|_{C^N(X,\om_{\epsilon}^{\bullet})}\leq C_Ne^{-\frac{\delta_N}{\epsilon}},$$
for some constants $C_N$ and $\delta_N$.  Scaling back, we get
$$\|\ti{\om}_{\epsilon}-\om_{\epsilon}\|_{C^N(X,\om_{\epsilon}^{\bullet})}\leq C_Ne^{-\frac{\delta_N}{\epsilon}},$$
for some (possibly different) $C_N$ and $\delta_N$. This completes the proof of \eqref{Equ: convergence of higher order estimate for CY}.

For \eqref{Equ: convergence of Rm in CY case},  note that
\[
\begin{split}
&R(\ti{\om}_{\epsilon})_{i\bar{j}k\bar{l}}-R(\om_{\epsilon})_{i\bar{j}k\bar{l}}\\
=&(g_{\epsilon})^{s\bar{v}}[(\ti{g}_{\epsilon})_{k\bar{v}}-(g_{\epsilon})_{k\bar{v}}]R(\om_{\epsilon})_{i\bar{j}s\bar{l}}+(g_{\epsilon})^{s\bar{v}}(\ti{g}_{\epsilon})_{k\bar{v}}\nabla^{\ti{g}_{\epsilon}}_i\nabla^{\ti{g}_{\epsilon}}_{\bar{j}}(g_{\epsilon})_{s\bar{l}}-(g_{\epsilon})^{s\bar{t}}(g_{\epsilon})^{p\bar{q}}(\ti{g}_{\epsilon})_{k\bar{t}}\nabla^{\ti{g}_{\epsilon}}_i(g_{\epsilon})_{s\bar{q}}\nabla^{\ti{g}_{\epsilon}}_{\bar{j}}(g_{\epsilon})_{p\bar{l}},\\
\end{split}
\]
hence we have
$$\Rm(\ti{\om}_{\epsilon})-\Rm(\om_{\epsilon})=[\ti{g}_{\epsilon}-g_{\epsilon}]*\Rm(\om_{\epsilon})+\sum_{j\geq 1,i_1+\dots +i_j=2,i_1,\dots ,i_j\geq 0}\nabla^{i_1,\ti{g}_{\epsilon}}g_{\epsilon}*\dots *\nabla^{i_j,\ti{g}_{\epsilon}}g_{\epsilon},$$
where $*$ denotes tensor contraction by $\ti{g}_{\epsilon}$ or $g_{\epsilon}$.  So we have
\[
\begin{split}
&\nabla^{k,\ti{g}_{\epsilon}}\left(\Rm(\ti{\om}_{\epsilon})-\Rm(\om_{\epsilon})\right)\\
=&\sum_{j\geq 1,i_1+\dots +i_j=k,i_1,\dots ,i_j\geq 0}\nabla^{i_1,\ti{g}_{\epsilon}}g_{\epsilon}*\dots *\nabla^{i_{j-2},\ti{g}_{\epsilon}}g_{\epsilon}*\nabla^{i_{j-1},\ti{g}_{\epsilon}}\left(\ti{g}_{\epsilon}-g_{\epsilon}\right)*\nabla^{i_{j},\ti{g}_{\epsilon}}\Rm(\om_{\epsilon})\\
&+\sum_{j\geq 1,i_1+\dots +i_j=k+2,i_1,\dots ,i_j\geq 0}\nabla^{i_1,\ti{g}_{\epsilon}}g_{\epsilon}*\dots *\nabla^{i_j,\ti{g}_{\epsilon}}g_{\epsilon}.\\
\end{split}
\]
By \eqref{Equ: convergence of higher order estimate for CY} , Corollary \ref{Cor: polynomial bound reference metrics 2} and Lemma \ref{Lem: relation between two metrics}, we obtain for all $k\geq 0$
\[
\left|\nabla^{k,\ti{g}_{\epsilon}}\left(\Rm(\ti{\om}_{\epsilon})-\Rm(\om_{\epsilon})\right)\right|_
{\om_{\epsilon}}
\leq C_ke^{-\frac{\delta_k}{\epsilon}}\cdot C_k{\epsilon}^{-n_k}+C_ke^{-\frac{\delta_k}{\epsilon}}\leq C_ke^{-\frac{\delta_k}{\epsilon}}.
\]
This completes the proof of Theorem \ref{Thm: convergence of higher order estimate for CY}.
\end{proof}


\section{Blow-up Limit at Singular Fibers}

In this section, we prove Theorem \ref{Thm: blow up limits at singular fiber}.  Again, we remark that similar analysis has been done for other types of Calabi-Yau metrics degenerations on K3 surfaces by Hein-Sun-Viaclovsky-Zhang in \cite[Section 7]{HSVZ}.

The following simple observation should be well-known to experts. 

\begin{lemma}\label{lem: change metric for CG limit}
If a sequence of pointed Riemannian manifolds $(M^n_i, \lambda_ig_i, p_i)$ converges in the $C^\infty$ Cheeger-Gromov sense to a pointed Riemannian manifold $(N, g_\infty, p_\infty)$, and if $h_i$ is another family of metrics on $M_i$ such that for any integer $k\geq 0$, $\lambda_i^{-\frac{k}{2}} \|h_i-g_i\|_{C^k(M_i, g_i)}\to 0$  as $i\to\infty$ , then we have
$$(M^n_i, \lambda_i h_i, p_i) \overset{C^{\infty}-Cheeger-Gromov}{\xrightarrow{\hspace*{3cm}}}(N, g_\infty, p_\infty).$$
\end{lemma}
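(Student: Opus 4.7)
The plan is to reuse exactly the same diffeomorphisms that witness the Cheeger-Gromov convergence $(M_i,\lambda_i g_i,p_i)\to (N,g_\infty,p_\infty)$, and to show that, with respect to these maps, $\lambda_i h_i$ pulls back to a sequence of metrics that also converges to $g_\infty$ in $C^\infty_{\mathrm{loc}}$. Concretely, let $U_1\subset U_2\subset\cdots$ be an exhaustion of $N$ by precompact open sets and let $\Phi_i:U_i\to M_i$ be smooth embeddings with $\Phi_i(p_\infty)=p_i$ such that $\Phi_i^*(\lambda_i g_i)\to g_\infty$ in $C^\infty$ on every compact $K\subset N$. Split
\[
\Phi_i^*(\lambda_i h_i)-g_\infty = \lambda_i\,\Phi_i^*(h_i-g_i)\;+\;\bigl(\Phi_i^*(\lambda_i g_i)-g_\infty\bigr),
\]
where the second term tends to zero in $C^k(K,g_\infty)$ for every $k$ by hypothesis. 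So the only task is to show the first term does the same.

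For the first term I would work upstairs on $M_i$, where $\Phi_i$ becomes an isometry from $(U_i,\Phi_i^*(\lambda_ig_i))$ onto its image. Since $\Phi_i^*(\lambda_ig_i)$ is $C^\infty$-close to $g_\infty$ on $K$, the $C^k(K,g_\infty)$ norm of any tensor is, for large $i$, comparable to its $C^k(K,\Phi_i^*(\lambda_ig_i))$ norm, which in turn equals the $C^k(\Phi_i(K),\lambda_ig_i)$ norm of the push-forward. Thus it suffices to bound $\|\lambda_i(h_i-g_i)\|_{C^k(M_i,\lambda_i g_i)}$. The key observation is that the Levi-Civita connection is invariant under constant rescaling, so $\nabla^{\lambda_i g_i}=\nabla^{g_i}$, and the standard scaling rule for a $(0,2)$-tensor gives, for each $0\le j\le k$,
\[
\bigl|\nabla^{\lambda_i g_i,j}\!\bigl(\lambda_i(h_i-g_i)\bigr)\bigr|_{\lambda_i g_i}
\;=\;\lambda_i\cdot \lambda_i^{-(j+2)/2}\,\bigl|\nabla^{g_i,j}(h_i-g_i)\bigr|_{g_i}
\;=\;\lambda_i^{-j/2}\,\bigl|\nabla^{g_i,j}(h_i-g_i)\bigr|_{g_i}.
\]
Taking the supremum and the maximum over $0\le j\le k$, this is exactly (bounded by) $\max_{0\le j\le k}\lambda_i^{-j/2}\|h_i-g_i\|_{C^j(M_i,g_i)}$, which tends to $0$ by the assumption on $h_i-g_i$.

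Combining the two pieces, $\Phi_i^*(\lambda_i h_i)\to g_\infty$ in $C^\infty$ on every compact subset of $N$, which is the definition of $C^\infty$ Cheeger-Gromov convergence of $(M_i,\lambda_i h_i,p_i)$ to $(N,g_\infty,p_\infty)$. There is no serious obstacle here; the only point that requires care is the bookkeeping of the two scaling factors—one coming from the rescaling of the metric used to measure norms and another from the rescaling of the tensor itself—which is what produces the precise exponent $\lambda_i^{-k/2}$ appearing in the hypothesis.
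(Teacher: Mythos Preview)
Your proposal is correct and follows essentially the same approach as the paper: reuse the diffeomorphisms witnessing convergence of $\lambda_i g_i$, split by the triangle inequality, convert $C^k(K,g_\infty)$-norms to $C^k(K,\Phi_i^*(\lambda_i g_i))$-norms using the $C^\infty$-closeness of these metrics, and then use the scaling law for $(0,2)$-tensor norms under constant rescaling. If anything, your bookkeeping of the scaling exponent order-by-order ($\lambda_i^{-j/2}$ for $0\le j\le k$) is slightly more careful than the paper's, which writes the single bound $C_K\lambda_i^{-k/2}\|h_i-g_i\|_{C^k(M_i,g_i)}$.
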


\begin{proof}
Let $K_i\subset\subset K_{i+1}\subset\subset\dots$ be an exhaustion of $N$ by relatively compact domains and $\phi_i: K_i\to M_i$ be diffeomorphisms such that $\lambda\phi_i^*g_i\to g_\infty$ in $C^k$ topology for any $k$ and on any fixed compact set $K$. We also assume that $p_\infty\in K_i$ and $\phi_i(p_\infty)=p_i$ for any $i$. Fix $k$ and $K$, for any $\epsilon>0$, we can find a $i_0$ such that for all $i\geq i_0$, we have  $K\subset K_i$ , all the metrics $\lambda_i\phi_i^*g_i$ are uniformly equivalent to $g_\infty$, and 
$$\|\lambda_i\phi_i^*g_i-g_\infty\|_{C^k(K, g_\infty )}<\epsilon.$$
So we have 
\[
\begin{split}
&\|\lambda_i\phi_i^*h_i-g_\infty\|_{C^k(K, g_\infty )}\\
\leq & \|\lambda_i\phi_i^*g_i-g_\infty\|_{C^k(K, g_\infty )}+\lambda_i\|\phi_i^*h_i-\phi_i^*g_i\|_{C^k(K, g_\infty )}\\
\leq & \epsilon+\lambda_i C_K\|\phi_i^*h_i-\phi_i^*g_i\|_{C^k(K, \lambda_i\phi_i^*g_i )}\\
\leq & \epsilon+C_K\lambda_i^{-\frac{k}{2}}\|h_i-g_i\|_{C^k(M_i, g_i)}<2\epsilon
\end{split}
\]
when $i$ is large enough. Here $C_K$ is a constant  independent of $i$ when $i\geq i_0$. This finishes the proof.
\end{proof}

For pointed Gromov-Hausdorff limits, similar conclusion holds by the same argument. In view of this lemma and Theorem \ref{Thm: convergence of higher order estimate for CY},  to prove Theorem \ref{Thm: blow up limits at singular fiber}, it suffices to find out the blow-up limits for $\om_\epsilon$ when $\epsilon\to 0$.

To fix notations, we review the construction of $\om_\epsilon$ in \cite{GW}. Outside the singular fibers, we have the semi-flat metric $\om_{SF}$, whose restrictions to fibers are flat. Let  $X_{p_i}$ be a singular fiber. Choose a holomorphic coordinate $y$ in a neighborhood $\ti{U}$ of $p_i$, with $\ti{U}$ contractible and $p_i$ is the unique point in $U$ whose preimage is singular. Let $\ti{U}^*=\ti{U}-\left\{p_i\right\}$, $X_{\ti{U}^*}=f^{-1}(\ti{U}^*)$. We can then choose over $\ti{U}^*$ (possibly multi-valued) holomorphic function $\tau(y)$ with $\text{Im}~ \tau(y)>0$ such that the fiber at $y$ is biholomorphic to $\CC/\Z\langle 1,\tau(y)\rangle$. 

Now, by the results of Gross-Wilson in \cite[Section 3]{GW}, we can then construct for all $\epsilon$ less than some $\epsilon_0$, the Ooguri-Vafa metric $\om_{OV}$ on $f^{-1}(U)$, for some $U=\left\{y|~ |y|<r\right\}$, where $r>0$ only depends on the period $\tau$ and $\epsilon_0$, but not $\epsilon$. Fix $r_1<r_2<r$ independent of $\epsilon$, and let $U_1=\left\{y|~ |y|<r_1\right\}$, $U_2=\left\{y|~ |y|<r_2\right\}$. In the construction of the almost Ricci -flat metrics in \cite[Section 4]{GW}, $U_2\backslash U_1$ is our gluing region. We focus on the region $U_1$.

Now let $\ti{p}_i$ being the unique point on the singular fibre $X_{p_i}=f^{-1}(p_i)$ such that $df$ is not surjective. As the construction in \cite[Section 3]{GW}, we can view $f^{-1}(U)$ as a singular $S^1$-bundle over $\bar{Y}=(U_1\times \mathbb{R})/{\epsilon\mathbb{Z}}$, say $\bar{\pi}:f^{-1}(U)\to \bar{Y}$, with the singular fiber being the $S^1$ collapsing to the point $\ti{p}_i$ at $\left\{0\right\}\times {\epsilon\mathbb{Z}}$. If we restrict $\bar{\pi}$ to $Y=(U_1\times \mathbb{R}-\left\{0\right\}\times {\epsilon\mathbb{Z}})/{\epsilon\mathbb{Z}}$, then we obtain an principal $S^1$-bundle over $Y$. We then apply the Gibbons-Hawking ansatz to $\pi: \pi^{-1}(Y)\to Y$, where the coordinates on $U_1-\left\{0\right\}$ is our holomorphic coordinate $y=y_1+iy_2$, and the coordinate on $(\mathbb{R}-{\epsilon\mathbb{Z}})/{\epsilon\mathbb{Z}}$ is $u$.  Let $\theta$ be a connection one form on the principal $S^1$ bundle. Then the curvature $d\theta$ is the pull back of a closed 2-form on $Y$. If we can find a function $V$ on $Y$ such that $*dV=\frac{d\theta}{2\pi i}$, where $*$ is the Hodge star taking with respect to the flat metric $du^2+dy_1^2+dy_2^2$. The Gibbons-Hawking ansatz is a metric on $\pi^{-1}(Y)$ of the form
$$g=V(du^2+dy_1^2+dy+2^2)+V^{-1}\theta_0^2,$$
where $\theta_0=\theta/2\pi$.

Since the singular fiber $X_{p_i}$ is of Kodaira type $I_1$,  we have $\tau(y)=\frac{1}{2\pi i}\mathrm{log}(y)+ih(y)$, where $h(y)=f(y)+ig(y)$ is a holomorphic function on $U_1$ independent of $\epsilon$. Define
$$V(\epsilon)=V_0(\epsilon)+\epsilon^{-1}f(y)$$
where $V_0(\epsilon)=\frac{1}{4\pi}\sum_{-\infty}^{\infty}\left(\frac{1}{\sqrt{(u+n\epsilon)^2+y_1^2+y_2^2}}-a_{|n|}\right)$. Here $a_n=\frac{1}{n\epsilon}~(n>0)$ and $a_0=2\epsilon^{-1}(-\gamma+\mathrm{log}(2\epsilon))$, with $\gamma$ is the Euler's constant. Then there exists a connection 1-form $\theta$ on $\bar{\pi}^{-1}(Y)$ such that $d\theta/2\pi i=*dV$, then set $\theta_0=\theta/2\pi$. Under these coordinates, the Ooguri-Vafa metric on $\bar{\pi}^{-1}(Y)$ is
$$V(\epsilon)\left(du^2+dy_1^2+dy_2^2\right)+V(\epsilon)^{-1}\theta_0^2.$$
Since the gluing is outside $U_1$, this equals the restriction of $g_\epsilon$.

Now we can prove Theorem \ref{Thm: blow up limits at singular fiber}:

\begin{proof}[Proof of Theorem \ref{Thm: blow up limits at singular fiber}]
Fix a point $p_0\in X_{p_i}$ as in the description of Theorem \ref{Thm: blow up limits at singular fiber}. We  change the coordinates
$$s=\epsilon^{-1}u,~v_1=\epsilon^{-1}y_1,~v_2=\epsilon^{-1}y_2.$$
Then we have
$$\epsilon^{-1}g_{\epsilon}=V_1(\epsilon)\left(ds^2+dv_1^2+dv_2^2\right)+V_1(\epsilon)^{-1}\theta_0^2,$$
where
$$V_1(\epsilon)=\epsilon V(\epsilon)=\ti{V}_0+\frac{1}{2\pi }\mathrm{log}(\epsilon^{-1})+f,$$
and $\ti{V}_0$ is the standard function $V_0$ in variables $s,v_1,v_2$ for $\epsilon=1$. Also, if we write $\rho=(s^2+v_1^2+v_2^2)^{\frac{1}{2}}$, then
$$V_1(\epsilon)=\rho^{-1}+f_1+\frac{1}{2\pi }\mathrm{log}(\epsilon^{-1}),$$
where $f_1$ is a globally well-defined harmonic function independent of $\epsilon$.

Finally, we denote by 
$$\beta_k=\frac{1}{2\pi}\mathrm{log}\left(\epsilon_k^{-1}\right),$$
$$d_k=d_{\epsilon_k^{-1}g_{\epsilon_k}}(p_0, \ti{p}_i).$$
Now we can analyze the geometry in all three regions.

\noindent {\bf Region (1).} If $p_0$ is in Region (1), then there exists a sequence $\epsilon_k\to 0$ and a uniform $R_0>0$ such that $d_k\leq R_0\beta_k^{-\frac{1}{2}}$. We set $\ti{g}^{\#}_k=\beta_k\epsilon_k^{-1}\ti{g}_{\epsilon_k}$ and $g^{\#}_k=\beta_k\epsilon_k^{-1}g_{\epsilon_k}$. We use the rescaled coordinates
$$a=\beta_ks=\beta_k\epsilon_k^{-1}u,~w_1=\beta_kv_1=\beta_k\epsilon_k^{-1}y_1,~w_2=\beta_kv_2=\beta_k\epsilon_k^{-1}y_2.$$
Then we have
$$ g^{\#}_k=\frac{V_1(\epsilon_k)}{\beta_k}(da^2+dw_1^2+dw_2^2)+\frac{\beta_k}{V_1(\epsilon_k)}\theta_0^2.$$
Note that rescaling of coordinates is equivalent to choosing diffeomorphisms in the Cheeger-Gromov convergence. 
Then as \cite[Lemma 7.9]{HSVZ},  since 
$$\frac{V_1(\epsilon_k)}{\beta_k}\to \frac{1}{\sqrt{a^2+w_1^2+w_2^2}}+1,$$
we have
$$\left(X, g^{\#}_k, \ti{p}_i\right)\overset{C^{\infty}-Cheeger-Gromov}{\xrightarrow{\hspace*{3cm}}}\left(\mathbb{R}^4, \ti{g}_{\infty}, 0_{\mathbb{R}^4}\right),$$
where $\ti{g}_{\infty}$ is the standard Ricci-flat Taub-NUT metric on $\mathbb{R}^4$ with origin $0_{\mathbb{R}^4}$. Now we have
$$d_{g^{\#}_k}(p_0,\ti{p}_i)=\beta_k^{\frac{1}{2}}d_k\leq \beta_k^{\frac{1}{2}}R_0\beta_k^{-\frac{1}{2}}=R_0.$$
Hence $p_0$ must converges to some point $p_{\infty}\in \mathbb{R}^4$, and using the Theorem \ref{Thm: convergence of higher order estimate for CY} and Lemma \ref{lem: change metric for CG limit}, we obtain
$$\left(X, \ti{g}^{\#}_k, p_0\right)\overset{C^{\infty}-Cheeger-Gromov}{\xrightarrow{\hspace*{3cm}}}\left(\mathbb{R}^4, \ti{g}_{\infty}, p_{\infty}\right).$$

\noindent {\bf Region (2).} If $p_0$ is in Region (2), then there exists a sequence $\epsilon_k\to 0$ such that $d_k\beta_k^{\frac{1}{2}}\to \infty$ and $d_k\beta_k^{-\frac{1}{2}}\to 0$ as $k\to\infty$. We then set $\gamma_k>0$ to be
$$\gamma_k^2=d_k\beta_k^{\frac{1}{2}},$$
then $\gamma_k\to \infty$ as $k\to\infty$. Set
$$r_k=\gamma_k\beta_k^{-1},$$
then we have $r_k=(d_k\beta_k^{-\frac{1}{2}})^{\frac{1}{2}}\beta_k^{-\frac{1}{2}}\to 0$ as $k\to\infty$. We then set
$$W_k=\left\{(s,v_1,v_2)|~s^2+v_1^2+v_2^2\leq r_k^2\right\}=\left\{(u, y_1, y_2)|~u^2+y_1^2+y_2^2\leq \epsilon_k^2r_k^2\right\}$$
and $W_k\subset \bar{Y}$ after $k$ is sufficiently large. We define $$\ti{g}^{\#}_k=d_k^{-2}\epsilon_k^{-1}\ti{g}_{\epsilon_k},\quad g^{\#}_k=d_k^{-2}\epsilon_k^{-1}g_{\epsilon_k}.$$

Now, the diameter of an circle $S^1$ of the fibration $\pi$, denoted by $S^1_v$, is controlled by
$$\mathrm{Diam}_{g^{\#}_k}(S^1_v)=d_k^{-1}\mathrm{Diam}_{\epsilon_k^{-1}g_{\epsilon_k}}(S^1_v)\leq d_k^{-1}\cdot C\beta_k^{-\frac{1}{2}}\to 0,~as~k\to \infty.$$
Meanwhile the diameter of an $S^1$ mapping  onto $\left\{y\right\}\times S^1\subset Y$, denoted by $S^1_h$, satisfies
$$\mathrm{Diam}_{g^{\#}_k}(S^1_h)=d_k^{-1}\mathrm{Diam}_{\epsilon_k^{-1}g_{\epsilon_k}}(S^1_h)\geq d_k^{-1}\cdot C^{-1}\beta_k^{\frac{1}{2}}\to \infty,~as~k\to \infty,$$
and similarly $\mathrm{Diam}_{g^{\#}_k}(f^{-1}(U_1))\to \infty,~as~k\to \infty$. Next by triangle inequality we have for $k$ large
\[
\begin{split}
\mathrm{Diam}_{g^{\#}_k}(\bar{\pi}^{-1}(W_k))
\leq&d_k^{-1}\cdot\int_{0}^{r_k}V_1(\epsilon_k)^{\frac{1}{2}}d\rho+d_k^{-1}\cdot C\beta_k^{-\frac{1}{2}}\\
\leq&d_k^{-1}\cdot\int_{0}^{r_k}\left(\rho^{-1}+2\beta_k\right)^{\frac{1}{2}}d\rho+C\beta_k^{-\frac{1}{2}}d_k^{-1}\\
\leq&d_k^{-1}\cdot2r_k^{\frac{1}{2}}+d_k^{-1}\cdot2\beta_k^{\frac{1}{2}}r_k+C\beta_k^{-\frac{1}{2}}d_k^{-1}\\
=&2\gamma_k^{-\frac{3}{2}}+2\gamma_k^{-1}+C\beta_k^{-\frac{1}{2}}d_k^{-1}\to 0.\\
\end{split}
\]
We now use the rescaled coordinates
$$a=\beta_k^{\frac{1}{2}}d_k^{-1}s=\beta_k^{\frac{1}{2}}d_k^{-1}\epsilon_k^{-1}u,~w_1=\beta_k^{\frac{1}{2}}d_k^{-1}v_1=\beta_k^{\frac{1}{2}}d_k^{-1}\epsilon_k^{-1}y_1,~w_2=\beta_k^{\frac{1}{2}}d_k^{-1}v_2=\beta_k^{\frac{1}{2}}d_k^{-1}\epsilon_k^{-1}y_2,$$
on $\mathbb{R}^3$, then we have
\[
\begin{split}
g^{\#}_k
=&d_k^{-2}\cdot\left(V_1(\epsilon_k)\left(ds^2+dv_1^2+dv_2^2\right)+V_1(\epsilon_k)^{-1}\theta_0^2\right)\\
=&\beta_k^{-1}V_1(\epsilon_k)\left(da^2+dw_1^2+dw_2^2\right)+d_k^{-2}V_1(\epsilon_k)^{-1}\theta_0^2.\\
\end{split}
\]
But on $f^{-1}(U_1)\backslash\bar{\pi}^{-1}(W_k)$ we have
$$d_k^{-2}V_1(\epsilon_k)^{-1}\leq d_k^{-2}\cdot C\beta_k^{-1}\to 0,~as~k\to \infty,$$
and since $\rho\geq r_k$ we have $\beta_k^{-1}\rho^{-1}\leq \beta_k^{-1}r_k^{-1}=\gamma_k^{-1}\to 0$ as $k\to\infty$, we have
$$\beta_k^{-1}V_1(\epsilon_k)=\beta_k^{-1}\rho^{-1}+\beta_k^{-1}f_1+1\to 1,~as~k\to \infty,$$
hence we have $g^{\#}_k\to g_{\mathbb{R}^3}=da^2+dw_1^2+dw_2^2$ in the Gromov-Hausdorff sense. Hence we can conclude using Theorem \ref{Thm: convergence of higher order estimate for CY} that
$$\left(X\backslash \bar{\pi}^{-1}(W_k), \ti{g}^{\#}_k, p_0\right)\overset{GH}{\xrightarrow{\hspace*{1cm}}}\left(\mathbb{R}^3\backslash\left\{0_{\mathbb{R}^3}\right\}, g_{\mathbb{R}^3}, p_{\infty}\right),$$
with $p_{\infty}=(1,0,0)$.

\noindent {\bf Region (3).} If $p_0$ is in Region (3), then there exists a sequence $\epsilon_k\to 0$ and uniform constants $r_0, C_0 >0$ such that $r_0\beta_k^{\frac{1}{2}}\leq d_k\leq C_0\beta_k^{\frac{1}{2}}$. We now set
$$r_k=\beta_k^{-\frac{3}{4}},$$
then $r_k\to 0$ as $k\to\infty$. We then set
$$W_k=\left\{(s,v_1,v_2)|~s^2+v_1^2+v_2^2\leq r_k^2\right\}=\left\{(u, y_1, y_2)|~u^2+y_1^2+y_2^2\leq \epsilon_k^2r_k^2\right\}$$
and then $W_k\subset \bar{Y}$ after $k$ is sufficiently large. We set $\ti{g}^{\#}_k=d_k^{-2}\epsilon_k^{-1}\ti{g}_{\epsilon_k}$ and $g^{\#}_k=d_k^{-2}\epsilon_k^{-1}g_{\epsilon_k}$.

Now as before the diameter of a ``vertical'' circle $S^1_v$ of the fibration $\pi$ is controlled by
$$\mathrm{Diam}_{g^{\#}_k}(S^1_v)=d_k^{-1}\mathrm{Diam}_{\epsilon_k^{-1}g_{\epsilon_k}}(S^1_v)\leq d_k^{-1}\cdot C_0\beta_k^{-\frac{1}{2}}\to 0,~as~k\to \infty,$$
while the diameter of a ``horizontal'' circle $S^1_h$ mapping onto $\left\{y\right\}\times S^1\subset Y$ satisfies
$$\mathrm{Diam}_{g^{\#}_k}(S^1_h)=d_k^{-1}\mathrm{Diam}_{\epsilon_k^{-1}g_{\epsilon_k}}(S^1_h)\approx d_k^{-1}\cdot \beta_k^{\frac{1}{2}}\in [r_0,C_0].$$
Also, we have
$$\mathrm{Diam}_{g^{\#}_k}(f^{-1}(U_1))\geq d_k^{-1}\cdot C^{-1}\epsilon_k^{-1}r_1\beta_k^{\frac{1}{2}}\to \infty,~as~k\to \infty.$$
Next by triangle inequality we have for $k$ large
\[
\begin{split}
\mathrm{Diam}_{g^{\#}_k}(\bar{\pi}^{-1}(W_k))
\leq&d_k^{-1}\cdot\int_{0}^{r_k}V_1(\epsilon_k)^{\frac{1}{2}}d\rho+d_k^{-1}\cdot C\beta_k^{-\frac{1}{2}}\\
\leq&d_k^{-1}\cdot\int_{0}^{r_k}\left(\rho^{-1}+2\beta_k\right)^{\frac{1}{2}}d\rho+C\beta_k^{-\frac{1}{2}}d_k^{-1}\\
\leq&d_k^{-1}\cdot2r_k^{\frac{1}{2}}+d_k^{-1}\cdot2\beta_k^{\frac{1}{2}}r_k+C\beta_k^{-\frac{1}{2}}d_k^{-1}\\
\leq&C\beta_k^{-1}\beta_k^{-\frac{3}{8}}+C\beta_k^{-1}\beta_k^{\frac{1}{2}}\beta_k^{-\frac{3}{4}}+C\beta_k^{-\frac{1}{2}}\beta_k^{-1}\to 0.\\
\end{split}
\]
We now use the rescaled coordinates
$$a=s=\epsilon_k^{-1}u,~w_1=v_1=\epsilon_k^{-1}y_1,~w_2=v_2=\epsilon_k^{-1}y_2,$$
on $S^1\times\mathbb{R}^2$, then we have
\[
\begin{split}
g^{\#}_k
=&d_k^{-2}\cdot V_1(\epsilon_k)\left(da^2+dw_1^2+dw_2^2\right)+d_k^{-2}\cdot V_1(\epsilon_k)^{-1}\theta_0^2,\\
\end{split}
\]
But on $f^{-1}(U_1)\backslash\bar{\pi}^{-1}(W_k)$ we have
$$d_k^{-2}V_1(\epsilon_k)^{-1}\leq C\beta_k^{-2}\to 0,~as~k\to \infty,$$
and since $\rho\geq r_k$ we have $\beta_k^{-1}\rho^{-1}\leq \beta_k^{-1}r_k^{-1}=\beta_k^{-\frac{1}{4}}\to 0$ as $k\to\infty$. Now, since $d_k^{-2}\beta_k$ is uniformly bounded from above and below from zero, we can pass to a subsequence such that $d_k^{-2}\beta_k\to \gamma_0^2>0$ as $k\to \infty$, hence $d_k^{-2}\cdot V_1(\epsilon_k)\to \gamma_0^2$ on $f^{-1}(U_1)\backslash\bar{\pi}^{-1}(W_k)$. Hence we have $g^{\#}_k\to g_0=\gamma_0^2(da^2+dw_1^2+dw_2^2)$ on $f^{-1}(U_1)\backslash\bar{\pi}^{-1}(W_k)$ in the Gromov-Hausdorff sense. Hence we can conclude using Theorem \ref{Thm: convergence of higher order estimate for CY} that
$$\left(X\backslash \bar{\pi}^{-1}(W_k), \ti{g}^{\#}_k, p_0\right)\overset{GH}{\xrightarrow{\hspace*{1cm}}}\left(S^1\times \mathbb{R}^2\backslash\left\{0\right\}, g_0, p_{\infty}\right).$$
This completes the proof of Theorem \ref{Thm: blow up limits at singular fiber}.
\end{proof}


\end{document}